\newtheorem{proposition}{Proposition}
\newtheorem{theorem}{Theorem}
\newtheorem{lemma}{Lemma}
\newtheorem{corollary}{Corollary}
\DeclareMathOperator{\dist}{dist}
\def\Z{\ns Z}
\def\dist{{\rm dist}}
\def\x{\mbox{\boldmath $x$}}
\def\y{\mbox{\boldmath $y$}}
\def\vec0{\mbox{\boldmath $0$}}
\def\G{\Gamma}
\def\Z{\ns{Z}}
\def\G{\Gamma}
\def\Z{\mathbb Z}
\begin{document}

\title{From subKautz digraphs to cyclic Kautz digraphs
\thanks{This research is supported by \emph{MINECO} under project MTM2014-60127-P, and the {\em Catalan Research
Council} under project 2014SGR1147. This research has also received funding from the European Union's Horizon 2020 research and innovation programme under the Marie Sk\l{}odowska-Curie grant agreement No 734922.}}

\author{C. Dalf\'o
\\ \\
{\small Departament de Matem\`atiques} \\
{\small Universitat Polit\`ecnica de Catalunya} \\
{\small Barcelona, Catalonia} \\
{\small {\tt cristina.dalfo@upc.edu}}
}
\date{}
\maketitle


\begin{abstract}
The Kautz digraphs $K(d,\ell)$ are a well-known family of dense digraphs, widely studied as a good model for interconnection networks.
Closely related to these, the cyclic Kautz digraphs $CK(d,\ell)$ were recently introduced by B\"ohmov\'a, Huemer and the author, and some of its distance-related parameters were fixed.
In this paper we propose a new approach to the cyclic Kautz digraphs
by introducing the family of the subKautz digraphs $sK(d,\ell)$, from where the cyclic Kautz digraphs can be obtained as line digraphs.
This allows us to give exact formulas for the distance between any two vertices of both $sK(d,\ell)$ and $CK(d,\ell)$. Moreover, we compute the diameter and the semigirth of both families, also providing  efficient routing algorithms to find the shortest path between any pair of vertices. Using these parameters, we also prove that $sK(d,\ell)$ and $CK(d,\ell)$ are maximally vertex-connected and super-edge-connected.
Whereas $K(d,\ell)$ are optimal with respect to the diameter, we show that $sK(d,\ell)$ and $CK(d,\ell)$ are optimal with respect to the mean distance,
whose exact values are given for both families when $\ell=3$. Finally, we provide a lower bound on the girth of $CK(d,\ell)$ and $sK(d,\ell)$.
\end{abstract}

\noindent{\em Mathematics Subject Classifications:} 05C20, 05C50.

\noindent{\em Keywords:} Digraph, distance, diameter, mean distance, routing,  Kautz digraph, line digraph, (vertex-)connectivity, edge-connectivity, superconnectivity, semigirth, girth.

\section{Introduction}

Originally, the Kautz digraphs were introduced by Kautz~\cite{Ka68} in 1968. They have many applications, for example, they are useful as network topologies for connecting processors. The Kautz digraphs have the smallest diameter among all digraphs with their number of vertices and degree.

The cyclic Kautz digraphs $CK(d,\ell)$ were recently introduced by B\"ohmov\'a, Huemer and the author \cite{bdh15,bdh16}, as
subdigraphs with special symmetries of the Kautz digraphs $K(d,\ell)$, see for example Fiol, Yebra and Alegre \cite{fya84}. In contrast with these, the set of vertices of the cyclic Kautz digraphs is invariant under
cyclic permutations of the sequences representing them.
Thus, apart from their possible applications in interconnection networks, the cyclic Kautz digraphs $CK(d,\ell)$ could be relevant in coding theory, because they are related to cyclic codes. A linear code $C$ of length $\ell$ is called cyclic if, for every codeword
$c=(c_1,\ldots,c_\ell)$, the codeword $(c_\ell,c_1,\ldots,c_{\ell-1})$ is also in $C$.
This cyclic permutation allows to identify codewords with polynomials.
For more information about cyclic codes and coding theory, see Van Lint~\cite{vL92} (Chapter 6). With respect to other properties of the cyclic Kautz digraphs $CK(d,\ell)$, their number of vertices follows sequences that have several interpretations. For example, for $d=2$ (that is, 3 different symbols) and $\ell=2,3,\ldots$, the number of vertices follows the sequence $6,6,18,30,66,\ldots$ According to the On-Line Encyclopedia of Integer Sequences~\cite{Sl}, this is the sequence A092297.
For $d=3$ (4 different symbols) and $\ell=2,3,\ldots$, we get the sequence $12,24,84,240,732,\ldots$ corresponding to
A226493 and A218034 in~\cite{Sl}.

In this paper we give an alternative definition of $CK(d,\ell)$, by introducing the family of the subKautz digraphs $sK(d,\ell)$, from where the cyclic Kautz digraphs can be obtained as line digraphs. We present the exact formula of the distance between any two vertices of $sK(d,\ell)$ and $CK(d,\ell)$. This allows us to compute the diameter and the semigirth of both families, also providing an efficient routing algorithm to find the shortest path between any pair of vertices. Using these parameters, we also prove that $sK(d,\ell)$ and $CK(d,\ell)$ are maximally vertex-connected and super-edge-connected. Whereas $K(d,\ell)$ are optimal with respect to the diameter, we show that $sK(d,\ell)$ and $CK(d,\ell)$ are optimal with respect to the mean distance, whose exact values are given for both families when $\ell=3$. Finally, we provide a lower bound on the girth of $sK(d,\ell)$ and $CK(d,\ell)$.

\subsection{Notation}

We  consider  simple digraphs (or directed graphs) without loops or multiple arcs,
and we follow the usual notation for them. That is, a \emph{digraph} $G=(V,E)$ consists of a (finite) set $V=V(G)$ of vertices and a set $E=E(G)$ of arcs (directed edges) between vertices of $G$.
If $a=(u,v)$ is an arc between vertices $u$ and $v$, then the vertex $u$
is \emph{adjacent to} the vertex $v$, and the vertex $v$
is \emph{adjacent from} $u$. Let $\Gamma^+(v)$ and $\Gamma^-(v)$ denote the set of vertices adjacent from and to the vertex $v$, respectively. Their cardinalities are the \emph{out-degree} $\delta^+(v)=|\Gamma^+(v)|$ of the vertex $v$, and the \emph{in-degree} $\delta^-(v)=|\Gamma^-(v)|$ of the vertex $v$. A digraph $G$ is called $d$\emph{-out-regular} if $\delta^+(v)=d$, $d$\emph{-in-regular} if $\delta^-(v)=d$, and $d$\emph{-regular} if $\delta^+(v)=\delta^-(v)=d$, for all $v\in V$. The minimum degree $\delta=\delta(G)$ of $G$ is the minimum over all the in-degrees and out-degrees of the vertices of $G$. A \emph{digon} is a directed cycle on 2 vertices.
For other notation, and unless otherwise
stated, we follow the book by Bang-Jensen and Gutin~\cite{BG07}.

In the {\em line digraph} $L(G)$ of a digraph $G$, each vertex represents an arc of $G$,
$V(L(G))=\{uv: (u,v)\in E(G)\}$, and a vertex $uv$ is adjacent to a vertex $wz$ when $v=w$, that is, when in $G$ the arc $(u,v)$ is adjacent to the arc $(w,z)$: $u\rightarrow v(=w)\rightarrow z$.
Fiol and Llad\'{o} defined in~\cite{FiLl92} the partial line digraph $PL(G)$ of a digraph $G$, where some (but not necessarily all, as in the line digraph $L(G)$) of the arcs in $G$ become vertices in $PL(G)$.
Let $E'\subseteq E$ be a subset of arcs which are incident to all vertices of $G$, that is, $\{v:(u,v)\in E'\}=V$. A digraph $PL(G)$ is said to be a \emph{partial line digraph} of $G$ if its vertices
represent the arcs of $E'$, that is, $V(PL(G))=\{uv:(u,v)\in E'\}$, and a vertex $uv$ is adjacent to the vertices $v'w$, for each $w\in\Gamma_G^+(v)$,
where
$$
v'=\left\{
\begin{array}{ll}
     v & \mbox{if } vw\in V(PL(G)), \label{cas1}\\
     \mbox{any other vertex of } \Gamma_G^-(w) \mbox{ such that } v'w\in V(PL(G))& \mbox{otherwise}\label{cas2}.
   \end{array}
\right.
$$

\vskip-.25cm

A digraph $G$ is \emph{strongly connected} when, for any pair of vertices $x,y\in V$, there always exists an $x\rightarrow y$ path, that is, a path from the vertex $x$ to the vertex $y$. The \emph{strong connectivity} $\kappa=\kappa(G)$ (or strong vertex-connectivity) of $G$ is the smallest number of vertices whose deletion results in a digraph that is either not strongly connected or trivial. Analogously, the \emph{strong arc-connectivity} $\lambda=\lambda(G)$ of $G$ is the smallest number of arcs whose deletion results in a not strongly connected digraph. Since we only deal with strong connectivities, from now on we are going to refer to them simply as connectivities. Now we only consider connected digraphs, so $\delta\geq1$. It is known that
$\kappa\leq\lambda\leq\delta$,
see Geller and Harary~\cite{GeHa70}. A digraph $G$ is \emph{maximally connected} when $\kappa=\lambda=\delta$.

If $G$ is a \emph{maximally arc-connected} digraph $(\lambda=\delta)$, then any set of arcs adjacent from [to] a vertex $x$ with out-degree [in-degree] $\delta$ is a minimum order arc-disconnecting set. Similarly, if $G$ is a \emph{maximally vertex-connected} digraph $(\kappa=\delta)$, the set of vertices adjacent from [to] $x$ is a minimum order vertex-disconnecting set. In this context, these arc or vertex sets are called \emph{trivial}. Note that the deletion of any trivial set isolates a vertex of in-degree or out-degree $\delta$. A digraph $G$ is \emph{super-}$\kappa$ if every minimum vertex-disconnecting set is trivial. Analogously, $G$ is \emph{super-}$\lambda$ is all its minimum arc-disconnecting sets are trivial. If $G$ is super-$\kappa$, then $\kappa=\delta$, and if $G$ is super-$\lambda$, then $\lambda=\delta$. In general, the converses are not true.

We say that a digraph is \emph{weakly antipodal} when every vertex $u$ has exactly one vertex $v$ at maximum distance (the diameter), and it is
\emph{antipodal} when simultaneously $u$ and $v$ are at maximum distance from each other. For instance, the directed cycle $C_n$ is weakly antipodal, whereas the symmetric directed cycle $C_n^*$ with even $n$ is antipodal.

\subsection{The semigirth}


We recall the definition of the \emph{semigirth}: For a given digraph $G$, let $\gamma=\gamma(G)$, for $1\leq \gamma\leq D$, where $D$ is the diameter, be the greatest integer
  such that for any two (not necessarily different) vertices $x,y\in V$,\\
\indent$(a)$ if $\dist(x,y)<\gamma$, then the shortest $x\rightarrow y$ path is unique, and there is no an $x\rightarrow y$ path of length $\dist(x,y)+1$;\\
\indent$(b)$ if $\dist(x,y)=\gamma$, then there is only one shortest $x\rightarrow y$ path.

Note that $\gamma$ is well defined when $G$ has no loops. In ~\cite{FaFi89}, F\`{a}brega and Fiol proved that, if a digraph $G$ (different from a directed cycle) has semigirth $\gamma$, then its line digraph $L(G)$ has semigirth $\gamma+1$. The diameter 
also has the same behaviour, that is, if the diameter of $G$ is $D$, then its line digraph $L(G)$ has diameter $D+1$.

We also recall two results from F\`{a}brega and Fiol~\cite{FaFi89} on the connectivities and superconnectivities.

\begin{theorem}[\cite{FaFi89}]
\label{theo1FaFi}
Let $G=(V,E)$ be a loopless digraph with minimum degree $\delta>1$, semigirth $\gamma$, diameter $D$ and connectivities $\lambda$ and $\kappa$.\\
\indent$(a)$ If $D\leq2\gamma$, then $\lambda=\delta$.\\
\indent$(b)$ If $D\leq2\gamma-1$, then $\kappa=\delta$.
\end{theorem}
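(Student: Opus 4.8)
The plan is to prove both statements by contradiction: from a minimum disconnecting set one extracts two vertices whose distance is forced above the assumed bound on $D$.

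\emph{Part $(a)$.} Since $\kappa\le\lambda\le\delta$ always, assume $\lambda\le\delta-1$ and let $F$ be an arc set with $|F|=\lambda$ whose deletion destroys strong connectivity. Then $V$ splits into nonempty parts $V_1,V_2$ with every arc of $G$ from $V_1$ to $V_2$ lying in $F$; write these arcs as $(u_i,v_i)$, $u_i\in V_1$, $v_i\in V_2$. The first observation, coming from $|F|<\delta$, is a robustness fact: every vertex of $V_1$ has at least $\delta-\lambda\ge1$ out-neighbours inside $V_1$ (its out-arcs leaving $V_1$ are among the $\le\lambda$ arcs of $F$ incident to it), and dually every vertex of $V_2$ has at least $\delta-\lambda\ge1$ in-neighbours inside $V_2$; moreover, by minimality of $F$, in $G-F$ each head $v_i$ reaches all of $V$ and each tail $u_j$ is reached from all of $V$.

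The heart of the argument is to produce $x\in V_1$ with $\dist(x,u_i)\ge\gamma$ for every $i$, and $y\in V_2$ with $\dist(v_i,y)\ge\gamma$ for every $i$. The point is that, by the semigirth hypothesis, shortest paths of length at most $\gamma$ are unique and not shadowed by a path one longer, so the out-ball and in-ball of radius $\gamma$ about any vertex are essentially tree-like; together with the minimum-degree condition this should force the at most $\lambda<\delta$ tails not to lie within distance $\gamma-1$ of all of $V_1$, and symmetrically for the heads and $V_2$. Granting such $x,y$, take any $x\to y$ path $P$; since $x\in V_1$, $y\in V_2$, $P$ uses an arc of $F$, and cutting $P$ at its first such arc $(u_j,v_j)$ gives a subpath $x\rightsquigarrow u_j$ lying in $V_1$ (hence of length $\ge\dist(x,u_j)\ge\gamma$), the arc $(u_j,v_j)$, and a subpath $v_j\rightsquigarrow y$ of length $\ge\dist(v_j,y)\ge\gamma$. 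Thus $\dist(x,y)\ge2\gamma+1>D$, contradicting $D\le2\gamma$, so $\lambda=\delta$.

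\emph{Part $(b)$.} Assume $\kappa\le\delta-1$ and let $C$, $|C|=\kappa$, be a vertex set whose deletion destroys strong connectivity; this gives nonempty $V_1,V_2$ with $V=V_1\cup V_2\cup C$ and no arc of $G-C$ from $V_1$ to $V_2$, so every $x\to y$ path with $x\in V_1$, $y\in V_2$ passes through $C$. As before $|C|<\delta$ makes each $V_i$ internally robust, and the same semigirth-plus-counting step, applied now to the single set $C$, should yield $x\in V_1$, $y\in V_2$ with $\dist(x,C)\ge\gamma$ and $\dist(C,y)\ge\gamma$. Letting $c$ be the first vertex of $C$ on any $x\to y$ path, $\dist(x,y)\ge\dist(x,c)+\dist(c,y)\ge2\gamma>D$, contradicting $D\le2\gamma-1$; hence $\kappa=\delta$. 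The stronger hypothesis $D\le2\gamma-1$ in the vertex case reflects exactly that the obstruction is contracted to a single vertex rather than an arc, so the forced detour has length $2\gamma$ instead of $2\gamma+1$.

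I expect the routine parts — the partition, the degree bookkeeping, and the path decomposition — to be straightforward, and the main obstacle to be the extraction of the two vertices at distance $\ge\gamma$ from the disconnecting set: one must rule out the degenerate configurations in which the few cut endpoints (or cut vertices) sit close to all of one side, and it is there that the tree-like local structure guaranteed by the semigirth — and, in borderline cases, the resulting constraints linking girth, semigirth and diameter — has to be used carefully.
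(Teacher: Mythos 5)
This theorem is quoted in the paper from F\`abrega and Fiol \cite{FaFi89}; the paper itself gives no proof, so the only meaningful comparison is with the original argument, and your outline does follow its general strategy (extremal vertices far from the disconnecting set, then a path decomposition forcing $\dist(x,y)\ge 2\gamma+1$, resp.\ $\ge 2\gamma$). However, there is a genuine gap: the existence of $x\in V_1$ with $\dist(x,u_i)\ge\gamma$ for all $i$ (and dually of $y$) is not a routine consequence of ``tree-like balls''; it is the entire content of the theorem, and you only assert that the semigirth ``should force'' it. The missing argument is a specific counting step. Let $\nu_1=\max_{x\in V_1}\min_i\dist(x,u_i)$ and suppose $\nu_1\le\gamma-1$; pick $x$ attaining $\nu_1$. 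Each out-neighbour $z$ of $x$ either lies in $V_2$, in which case $(x,z)\in F$, or lies in $V_1$, in which case $\dist(z,\{u_i\})\le\nu_1$ gives a walk $x\rightarrow z\rightsquigarrow u_{i(z)}$ of length at most $\nu_1+1\le\gamma$, while every tail is at distance at least $\nu_1$ from $x$. Conditions $(a)$ and $(b)$ in the definition of $\gamma$ (no path of length $\dist+1$ when $\dist<\gamma$, uniqueness of shortest paths up to length $\gamma$) then force each such walk to be \emph{the} unique shortest $x\rightarrow u_{i(z)}$ path, so distinct out-neighbours of $x$ correspond to distinct arcs of $F$. Hence $\delta\le\delta^+(x)\le|F|=\lambda<\delta$, a contradiction, and therefore $\nu_1\ge\gamma$; the symmetric claim for $V_2$ follows in the converse digraph, and the analogous injection into the vertex cut $C$ gives part $(b)$. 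Without this step (or an equivalent one) your text is an outline rather than a proof; as you yourself note, everything else (the partition, the degree bookkeeping, splitting an $x\rightarrow y$ path at the first cut arc or cut vertex) is routine.

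Two smaller remarks: the ``robustness fact'' that every vertex of $V_1$ keeps $\delta-\lambda$ out-neighbours in $V_1$, and the observation that each head $v_i$ reaches all of $V$ in $G-F$, are correct but play no role in the actual argument; and appealing to tree-likeness of balls of radius $\gamma$ is weaker than what is needed, since the proof does not use any global structure of these balls but precisely the uniqueness/no-detour properties applied to walks from one extremal vertex to the at most $\lambda$ tails (or the at most $\kappa$ cut vertices).
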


\begin{theorem}[\cite{FaFi89}]
\label{theo2FaFi}
Let $G=(V,E)$ be a loopless digraph with minimum degree $\delta\geq3$, semigirth $\gamma$, and diameter $D$.\\
\indent$(a)$ If $D\leq2\gamma$, then $G$ is super-$\lambda$.\\
\indent$(b)$ If $D\leq2\gamma-2$, then $G$ is super-$\kappa$.
\end{theorem}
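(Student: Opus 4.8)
The plan is to handle both parts by one scheme: super-$\lambda$ (resp.\ super-$\kappa$) means that \emph{every} minimum arc- (resp.\ vertex-) disconnecting set is trivial, so I would first invoke Theorem~\ref{theo1FaFi} to fix the size of such a set, then reduce triviality to a local statement about how few arcs (resp.\ vertices) can block all short enough paths between two fixed vertices, and finally settle that statement using the uniqueness of shortest paths that the semigirth enforces inside balls of radius at most $\gamma$. For $(a)$: by Theorem~\ref{theo1FaFi}$(a)$ the hypothesis $D\le 2\gamma$ already gives $\lambda=\delta$, so a minimum arc-disconnecting set $F$ has $|F|=\delta$; it induces a partition $V=V_1\cup V_2$ into nonempty parts with no arc from $V_1$ to $V_2$ in $G-F$, so $\omega^{+}(V_1)$ (all arcs from $V_1$ to $V_2$) lies in $F$, and, being itself disconnecting, has $|\omega^{+}(V_1)|\ge\lambda=\delta=|F|$; hence $\omega^{+}(V_1)=F$. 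Fixing arbitrary $u\in V_1$, $w\in V_2$, every $u\rightarrow w$ path crosses from $V_1$ to $V_2$ and so uses an arc of $F$, while $\dist(u,w)\le D\le 2\gamma$; thus it suffices to prove the local statement: \emph{if $\dist(u,w)\le 2\gamma$ and a set $S$ of at most $\delta$ arcs meets every $u\rightarrow w$ path, then $S$ is the set of all out-arcs of $u$ or the set of all in-arcs of $w$} (in either case $|S|=\delta$ and $S$ is trivial), for then $S=F$ is trivial.

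To prove the local statement, write $\dist(u,w)=p+q$ with $0\le p,q\le\gamma$ --- possible exactly because $\dist(u,w)\le 2\gamma$. By clauses $(a)$--$(b)$ in the definition of $\gamma$, every vertex $z$ with $\dist(u,z)\le p$ has a unique shortest $u\rightarrow z$ path and no $u\rightarrow z$ path of length $\dist(u,z)+1$, so the ball $B^{+}(u,p)=\{z:\dist(u,z)\le p\}$ together with its shortest-path arcs is an out-branching rooted at $u$; symmetrically $B^{-}(w,q)$ is an in-branching rooted at $w$. One then shows that, unless $S$ contains all out-arcs of $u$, one can route $\delta^{-}(w)$ distinct $u\rightarrow w$ paths which follow the out-branching of $u$, cross from $B^{+}(u,p)$ into $B^{-}(w,q)$, and climb the in-branching of $w$ along its $\delta^{-}(w)$ different in-arcs; since $S$ must hit each of these paths but, by path-uniqueness in the two balls, can only do so at the last in-arc of $w$, we get $S\supseteq\omega^{-}(\{w\})$, hence $\delta^{-}(w)=|S|=\delta$ and $S=\omega^{-}(\{w\})$. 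That is the claimed dichotomy.

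Part $(b)$ is the same with vertices: a minimum vertex-disconnecting set $W$ has $|W|=\delta$ by Theorem~\ref{theo1FaFi}$(b)$ (using $2\gamma-2\le 2\gamma-1$), it induces a partition $V=V_1\cup W\cup V_2$, and one applies the vertex analogue of the local statement to arbitrary $u\in V_1$, $w\in V_2$ with $\dist(u,w)\le D\le 2\gamma-2$. The one change is bookkeeping: a separating \emph{vertex} is itself visited by every blocked path and so costs an extra unit of length at each end, which forces $\dist(u,w)=p+q$ with $0\le p,q\le\gamma-1$ and is exactly why the hypothesis must be $D\le 2\gamma-2$ rather than $D\le 2\gamma-1$. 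One concludes $W=\Gamma^{+}(u)$ or $W=\Gamma^{-}(w)$, i.e.\ $G$ is super-$\kappa$.

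The main obstacle is the local statement, and inside it the careful control of shortest $u\rightarrow w$ paths near the bottleneck: such a path need not stay on the $V_1$-side until it jumps across, so one has to use the uniqueness and no-detour clauses of the semigirth to prove that the distance from $u$ to the blocking set and the distance from the blocking set to $w$ genuinely add up to $\dist(u,w)$. Done properly, the same estimate both re-derives Theorem~\ref{theo1FaFi} (when $|S|<\delta$ some $u\rightarrow w$ path avoids $S$ entirely) and pins down the trivial sets when $|S|=\delta$; managing the branching structures on both sides simultaneously while keeping $p,q\le\gamma$ (resp.\ $\le\gamma-1$) is the technical heart.
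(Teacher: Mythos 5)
The paper itself gives no proof of this statement: it is quoted verbatim from F\`abrega and Fiol \cite{FaFi89}, so there is no internal argument to compare yours against, and your proposal has to stand on its own. It does not, for two concrete reasons. First, the entire difficulty is concentrated in your ``local statement'', and exactly there the argument is asserted rather than proved: the step ``one then shows that, unless $S$ contains all out-arcs of $u$, one can route $\delta^-(w)$ distinct $u\rightarrow w$ paths \ldots which $S$, by path-uniqueness in the two balls, can only hit at the last in-arc of $w$'' is the whole theorem. Uniqueness of shortest paths inside $B^+(u,p)$ and $B^-(w,q)$ does not prevent $S$ from containing an arc in the middle of one of those unique paths, in which case that path is blocked there and not at $\omega^-(w)$; what is needed is the F\`abrega--Fiol-style counting argument exploiting that distinct branches of the two shortest-path trees are arc-disjoint, that $|S|\le\delta$, and, crucially, that $\delta\ge 3$. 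Your sketch never uses $\delta\ge 3$ at all, which is a red flag by itself: Theorem~\ref{theo1FaFi} needs only $\delta>1$, and the extra degree hypothesis is precisely what the superconnectivity conclusions require, so a correct proof must invoke it somewhere.

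Second, the local statement is too strong as formulated and is not what triviality of the cut means. For an \emph{arbitrary} pair $u\in V_1$, $w\in V_2$, the minimum cut $F=\omega^{+}(V_1)$ need not be $\omega^{+}(u)$ nor $\omega^{-}(w)$: being trivial only means $F$ is the out-star or in-star of \emph{some} vertex, and that vertex can be an intermediate $z\in V_1\setminus\{u\}$ all of whose out-arcs cross the cut, with $|V_1|\ge 2$ (such a set still meets every $u\rightarrow w$ path and has size $\delta$, so it satisfies the hypotheses of your local statement while violating its conclusion). Note also that $D\le 2\gamma$ does not give $\kappa=\delta$ (that needs $D\le 2\gamma-1$), so under the hypotheses of part $(a)$ you cannot even exclude that a single vertex's out-star separates $u$ from $w$. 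Hence the dichotomy ``$S=\omega^{+}(u)$ or $S=\omega^{-}(w)$'' cannot be the target; the proof must \emph{locate} the vertex whose star the cut is, typically by comparing $\max_{x\in V_1}\dist(x,\cdot)$ and $\max_{y\in V_2}\dist(\cdot,y)$ with $\gamma$ and showing one side collapses. The same objections apply verbatim to your treatment of part $(b)$; the surrounding reductions (using Theorem~\ref{theo1FaFi} to get $|F|=\delta$ and $\omega^{+}(V_1)=F$) are fine but are the easy part.
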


\subsection{Moore digraphs with respect to the diameter and the mean distance}
\label{section:mean-dist}

The Moore bound on the number of vertices for digraphs with diameter $D$ and maximum degree $\Delta$ is
$N(\Delta,D)=\frac{\Delta^{D+1}-1}{\Delta-1}$ for $\Delta>1$ and $N(1,D)=D+1$.
Notice that $N\sim O(\Delta^{D})$.

The digraphs that attain the Moore bound $N(\Delta,D)$ are called Moore digraphs. The only Moore digraphs are the directed cycles on $D+1$ vertices and the complete digraphs on $\Delta+1$ vertices.
For $D>1$ and $\Delta>1$, there are no Moore digraphs. For more information, see the survey by Miller and \v{S}ira\v{n}~\cite{MiSi13}.

The mean distance corresponding to a digraph attaining the Moore bound is given in the following result. As the only Moore digraphs are the directed cycles and the complete digraphs, this bound gives an idea of how close is a digraph (with diameter $D$ and maximum degree $\Delta$) of being a Moore digraph.

\begin{lemma}
The mean distance $\overline{\partial}(\Delta,D)$ of a digraph with diameter $D$ and maximum degree $\Delta$ attaining the Moore bound would be
$$
\overline{\partial}(\Delta,D)=\frac{D\Delta^{D+2}-(1+D)\Delta^{D+1}+\Delta}{\Delta^{D+2}-\Delta^{D+1}-\Delta+1}.
$$
\end{lemma}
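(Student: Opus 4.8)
The plan is to exploit the defining property of a Moore digraph — that the bound $N(\Delta,D)$ is attained — to determine the exact distance distribution seen from any vertex, and then to evaluate the resulting arithmetic--geometric sum in closed form.

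First I would record the elementary consequence of attaining the Moore bound. Let $G$ be a digraph on $N=N(\Delta,D)$ vertices with maximum degree $\Delta$ and diameter $D$, fix a vertex $u$, and let $n_k$ denote the number of vertices at distance exactly $k$ from $u$. Since every vertex at distance $k$ is an out-neighbour of some vertex at distance $k-1$, a breadth-first-search argument gives $n_0=1$ and $n_k\le\Delta\,n_{k-1}$, hence $n_k\le\Delta^k$ for all $k$. As $G$ is strongly connected with diameter $D$, the vertex set is the disjoint union of the distance classes, so $N=\sum_{k=0}^{D}n_k\le\sum_{k=0}^{D}\Delta^k=N(\Delta,D)=N$. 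Equality forces $n_k=\Delta^k$ for every $k\in\{0,\dots,D\}$ and every $u$. (For $\Delta=1$ the Moore digraph is the directed cycle $C_{D+1}$, for which again $n_k=1=\Delta^k$; alternatively this case is recovered as the limit $\Delta\to1$ of the final formula.)

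Next, by the definition of the mean distance, $\overline{\partial}(G)=\frac{1}{N^2}\sum_{u\in V}\sum_{v\in V}\dist(u,v)$. By the previous paragraph the inner sum equals $S:=\sum_{k=0}^{D}k\,\Delta^k=\sum_{k=1}^{D}k\,\Delta^k$ for every $u$, so $\overline{\partial}(\Delta,D)=S/N$. It remains to evaluate $S$ and $N$ and simplify. For $\Delta\neq1$ we have $N=\sum_{k=0}^{D}\Delta^k=\frac{\Delta^{D+1}-1}{\Delta-1}$; differentiating the identity $\sum_{k=0}^{D}x^{k}=\frac{x^{D+1}-1}{x-1}$ and multiplying by $x$ gives
$$
S=\sum_{k=1}^{D}k\,\Delta^{k}=\frac{D\,\Delta^{D+2}-(D+1)\Delta^{D+1}+\Delta}{(\Delta-1)^{2}}.
$$
Substituting into $\overline{\partial}(\Delta,D)=S/N$, one factor $\Delta-1$ cancels, and expanding $(\Delta-1)(\Delta^{D+1}-1)=\Delta^{D+2}-\Delta^{D+1}-\Delta+1$ in the denominator yields the stated expression.

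I do not expect a genuine obstacle here: the only real content is the distance-distribution claim, which is immediate from Moore-bound equality, and the remainder is the routine closed-form evaluation of $\sum_{k=1}^{D}k\,\Delta^{k}$. The one point to be careful about is the normalisation convention for the mean distance: the displayed formula corresponds to dividing the sum of all ordered (possibly coincident) pairwise distances by $N^{2}$ — equivalently, by $N$ per source vertex — whereas normalising by $N-1$ would produce a different denominator.
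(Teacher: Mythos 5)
Your proposal is correct and follows essentially the same route as the paper: the mean distance is $\frac{1}{N(\Delta,D)}\sum_{k=0}^{D}k\Delta^{k}$, evaluated in closed form by differentiating the geometric sum, with the denominator expanded as $(\Delta-1)(\Delta^{D+1}-1)=\Delta^{D+2}-\Delta^{D+1}-\Delta+1$. The only difference is that you spell out why Moore-bound equality forces exactly $\Delta^{k}$ vertices in each distance layer, which the paper simply takes as the defining property of attaining the bound.
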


\begin{proof}
We compute $\overline{\partial}(\Delta,D)$ taking into account that the maximum number of vertices at distance $k$ is $\Delta^k$.
\begin{eqnarray*}
\overline{\partial}(\Delta,D) &=& \frac{1}{N(\Delta,D)}\sum_{k=0}^D k\Delta^k=\frac{\Delta}{N(\Delta,D)}\sum_{k=0}^D k\Delta^{k-1}= \frac{\Delta}{N(\Delta,D)}\left(\sum_{k=0}^D \Delta^{k}\right)'\\
                              &=& \frac{\Delta}{N(\Delta,D)} \left(\frac{\Delta^{D+1}-1}{\Delta-1}\right)'= \frac{D\Delta^{D+2}-(1+D)\Delta^{D+1}+\Delta}{\Delta^{D+2}-\Delta^{D+1}-\Delta+1}.
\end{eqnarray*}
\vskip-.75cm
\end{proof}

\vskip-.5cm

We can define a digraph as optimal with respect to the diameter (the maximum delay in a message transmission), but also with respect to the mean distance (the average delay in a message transmission). So, we can say that a digraph is optimal when, if $N$ is of the order of $\Delta^k$, then its mean distance is of the order of $k$, that is, when $\overline{\partial}\sim O(\log_{\Delta}N)$.


\section{Kautz-like digraphs}

The Kautz $K(d,\ell)$, the subKautz $sK(d,\ell)$, the cyclic Kautz $CK(d,\ell)$, and the modified cyclic Kautz $MCK(d,\ell)$ digraphs have vertices represented by words on an alphabet, and adjacencies between vertices correspond to shifts of the words. In these Kautz-like digraphs a path $\x\rightarrow\y$ corresponds to a sequence beginning with $\x=x_1x_2\ldots x_{\ell}$ and finishing with $\y=y_1y_2\ldots y_{\ell}$, where every subsequence of length $\ell$ corresponds to a vertex of the corresponding digraph.

\subsection{Kautz and subKautz digraphs}


\begin{figure}[t]
    \begin{center}
  \includegraphics[width=14cm]{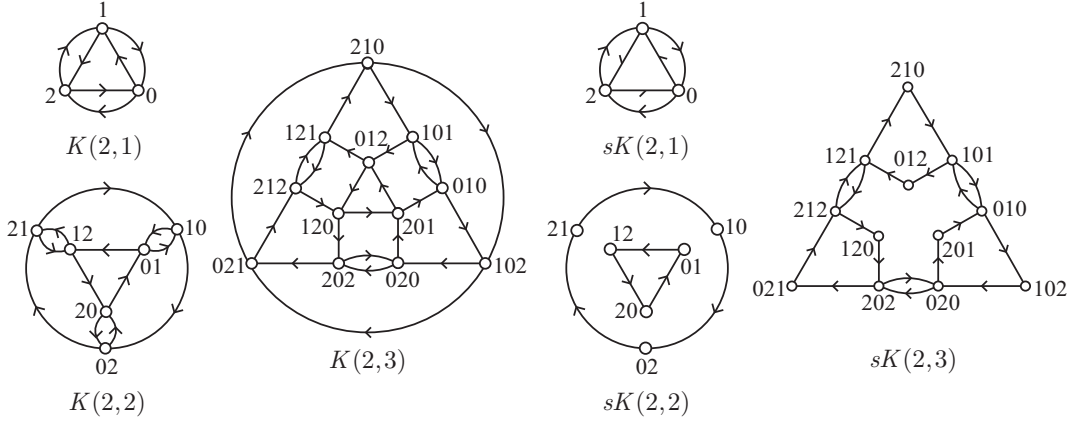}
  \caption{Some examples of the Kautz and the subKautz digraphs.} \label{fig:Kautz}
  \end{center}
\end{figure}

Next, we recall the definitions of the Kautz $K(d,\ell)$, and we define a new family of Kautz-like digraphs called the subKautz digraphs $sK(d,\ell)$. See examples of both in Figure~\ref{fig:Kautz}.

A \emph{Kautz digraph} $K(d,\ell)$ has the vertices $x_1x_2\ldots x_{\ell}$, where $x_i\in \mathbb{Z}_{d+1}$, with $x_{i}\neq x_{i+1}$ for $i=1,\ldots,\ell-1$, and adjacencies
$$
x_1x_2\ldots x_{\ell}\quad\rightarrow\quad x_2x_3\ldots x_{\ell} y,\qquad y\neq x_{\ell}.
$$

Given integers $d$ and $\ell$, with $d,\ell\geq2$, a \emph{subKautz digraph} $sK(d,\ell)$ has set of vertices
$V=\{x_1x_2\ldots x_{\ell}:x_i\neq x_{i+1},\ i=1,\ldots,\ell-1\}\subset \Z_{d+1}^{\ell}$,
and adjacencies
\begin{equation}
\label{eq:adj-sK}
x_1x_2\ldots x_{\ell}\quad\rightarrow\quad x_2\ldots x_{\ell}x_{\ell+1}, \qquad x_{\ell+1}\neq x_1,x_{\ell}.
\end{equation}
Hence, the subKautz digraph $sK(d,\ell)$ has $d^{\ell}+d^{\ell-1}$ vertices, as the Kautz digraph $K(d,\ell)$. Besides, the out-degree of
a vertex $x_1x_2\ldots x_{\ell}$ is $d$ if $x_1=x_{\ell}$, and $d-1$ otherwise.
In particular, the subKautz digraph $sK(d,2)$ is $(d-1)$-regular and can be obtained from the Kautz digraph $K(d,2)$ by removing all its arcs forming a digon.

Note that the subKautz digraph $sK(d,\ell)$ is a subdigraph of the Kautz digraph $K(d,\ell)$.

\subsection{Cyclic Kautz and modified cyclic Kautz digraphs}


\begin{figure}[t]
    \begin{center}
  \includegraphics[width=12cm]{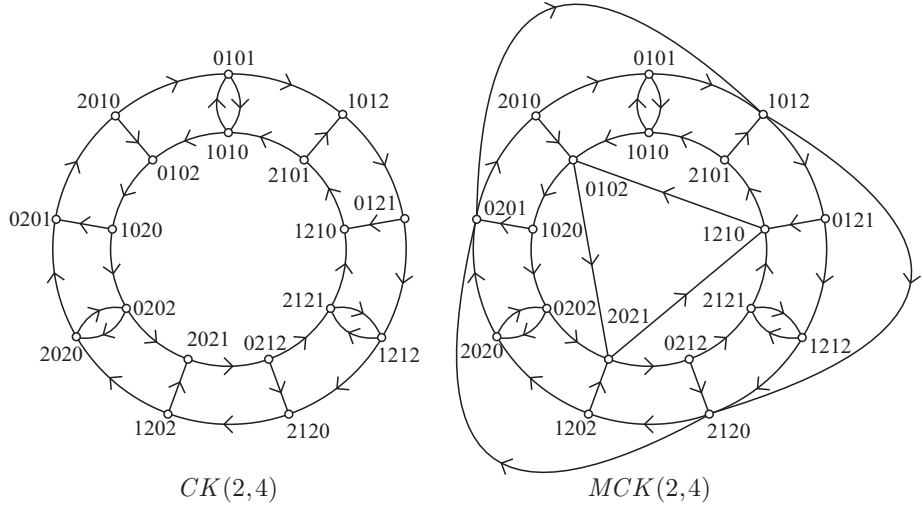}
  \caption{An example of a cyclic Kautz digraph and a modified cyclic Kautz digraph.}
  \label{fig:Kautz-ciclics}
  \end{center}
\end{figure}

Next, we recall the definitions of the cyclic Kautz digraphs $CK(d,\ell)$ and the modified cyclic Kautz digraphs $MCK(d,\ell)$. See an example of both in Figure~\ref{fig:Kautz-ciclics}.

A \emph{cyclic Kautz digraph} $CK(d,\ell)$ has the vertices $x_1x_2\ldots x_{\ell}$, where $x_i\in \mathbb{Z}_{d+1}$, with $x_{i}\neq x_{i+1}$ for $i=1,\ldots,\ell-1$, and $x_{\ell}\neq x_1$, and adjacencies
$$
x_1x_2\ldots x_{\ell}\quad\rightarrow\quad x_2x_3\ldots x_{\ell} y,\qquad y\neq x_2,x_{\ell}.
$$

Note that the cyclic Kautz digraphs $CK(d,\ell)$ are subdigraphs of the Kautz digraph $K(d,\ell)$. It was proved in \cite{bdh16} that
when $d=2$ the cyclic Kautz digraphs $CK(2,\ell)$ are not connected (except for the case $\ell=4$), and when $\ell=2$ the cyclic Kautz digraphs $CK(d,2)$ coincide with the Kautz digraphs $K(d,2)$.

Recall that the diameter of the Kautz digraphs is optimal, that is, for a fixed out-degree $d$ and number of vertices $(d+1)d^{\ell-1}$, the Kautz digraph $K(d,\ell)$ has the smallest diameter $(D=\ell)$ among all digraphs with $(d+1) d^{\ell-1}$ vertices and degree $d$ (see, for example, Miller and \v{S}ir\'{a}\v{n}~\cite{MiSi13}).
Since the diameter of the cyclic Kautz digraphs $CK(d,\ell)$ is greater than the diameter of the Kautz digraphs $K(d,\ell)$, in~\cite{bdh16b} we constructed the \emph{modified cyclic Kautz digraphs} $MCK(d,\ell)$ by adding some arcs to $CK(d,\ell)$, in order to obtain the same diameter as $K(d,\ell)$, without increasing the maximum degree.
In a cyclic Kautz digraph $CK(d,\ell)$, a vertex labeled with $a_2\ldots a_{\ell+1}$ is forbidden if $a_2=a_{\ell+1}$. For each label, we replace the first symbol $a_2$ by one of the possible symbols $a_2'$ such that now $a_2'\neq a_3, a_{\ell+1}$ (so $a_2'\ldots a_{\ell+1}$ represents a vertex). Then, we add arcs from the vertex $a_1\ldots a_{\ell}$ to the vertex $a_2'\ldots a_{\ell+1}$, with $a_1\neq a_{\ell}$ and $a_2'\neq a_3,a_{\ell+1}$. Note that $CK(d,\ell)$ and $MCK(d,\ell)$ have the same vertices, because we only add arcs to $CK(d,\ell)$ to obtain $MCK(d,\ell)$.

\begin{lemma}
\indent$(a)$ The cyclic Kautz digraph $CK(d,\ell)$ is the line digraph of the subKautz digraph $sK(d,\ell-1)$, that is, $CK(d,\ell)=L(sK(d,\ell-1))$.\\
\indent$(b)$ The modified cyclic Kautz digraph $MCK(d,\ell)$ is the partial line digraph of the Kautz digraph $K(d,\ell-1)$, that is, $MCK(d,\ell)=PL(K(d,\ell-1))$.
\end{lemma}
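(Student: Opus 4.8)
The plan is to prove both identities by direct verification, matching the labels and the adjacency rules of the digraphs on the two sides with those produced by the (partial) line digraph construction. For part~$(a)$, I would first describe the arcs of $sK(d,\ell-1)$: a vertex of $sK(d,\ell-1)$ is a word $x_1\ldots x_{\ell-1}$ with $x_i\neq x_{i+1}$, and, by the adjacency rule \eqref{eq:adj-sK} with $\ell$ replaced by $\ell-1$, an arc of $sK(d,\ell-1)$ joins $x_1\ldots x_{\ell-1}$ to $x_2\ldots x_{\ell-1}x_{\ell}$ exactly when $x_{\ell}\neq x_1,x_{\ell-1}$. Hence each arc is faithfully encoded by the word $x_1x_2\ldots x_{\ell}$ subject to $x_i\neq x_{i+1}$ for $i=1,\ldots,\ell-1$ together with $x_1\neq x_{\ell}$, and these are exactly the words labelling the vertices of $CK(d,\ell)$; thus $V(L(sK(d,\ell-1)))=V(CK(d,\ell))$. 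For the arcs, the line digraph rule says that the arc labelled $x_1\ldots x_{\ell}$ is adjacent to the arc labelled $x_2\ldots x_{\ell+1}$, and the latter is an arc of $sK(d,\ell-1)$ precisely when $x_{\ell+1}\neq x_2,x_{\ell}$, which is literally the adjacency rule of $CK(d,\ell)$ ($x_1\ldots x_{\ell}\rightarrow x_2\ldots x_{\ell}y$ with $y\neq x_2,x_{\ell}$). This settles~$(a)$.

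For part~$(b)$ the argument is parallel, but now one must keep track of the freedom allowed in a partial line digraph. The arcs of $K(d,\ell-1)$ are the words $x_1\ldots x_{\ell}$ with $x_i\neq x_{i+1}$ for $i=1,\ldots,\ell-1$ (equivalently, the vertices of $K(d,\ell)$). I would take $E'$ to be the subset of those arcs satisfying also $x_1\neq x_{\ell}$, so that
$$V(PL(K(d,\ell-1)))=\{x_1\ldots x_{\ell}:\ x_i\neq x_{i+1},\ x_1\neq x_{\ell}\}=V(CK(d,\ell))=V(MCK(d,\ell)),$$
which is what is needed for the two digraphs to share their vertex set. One must check that $E'$ is admissible, that is, incident to every vertex of $K(d,\ell-1)$: a vertex $x_2\ldots x_{\ell}$ of $K(d,\ell-1)$ is the head of the arc $x_1\ldots x_{\ell}\in E'$ for any $x_1\in\Z_{d+1}$ with $x_1\neq x_2,x_{\ell}$, and such a symbol exists because $|\Z_{d+1}|=d+1\geq3$.

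It remains to compare adjacencies. Let $uv=x_1\ldots x_{\ell}\in V(PL(K(d,\ell-1)))$, so $u=x_1\ldots x_{\ell-1}$ and $v=x_2\ldots x_{\ell}$, and let $w=x_3\ldots x_{\ell}x_{\ell+1}$ range over $\Gamma_{K(d,\ell-1)}^+(v)$, i.e.\ over all $x_{\ell+1}\neq x_{\ell}$. If $x_{\ell+1}\neq x_2$, then $vw=x_2\ldots x_{\ell+1}$ belongs to $V(PL(K(d,\ell-1)))$, so the arc $x_1\ldots x_{\ell}\rightarrow x_2\ldots x_{\ell}x_{\ell+1}$ is present; as $x_{\ell+1}\neq x_2,x_{\ell}$, this is exactly an arc of $CK(d,\ell)$, hence of $MCK(d,\ell)$. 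If instead $x_{\ell+1}=x_2$ (which forces $x_2\neq x_{\ell}$), then $vw$ is the forbidden word $x_2\ldots x_{\ell}x_2\notin V(PL(K(d,\ell-1)))$, so the construction replaces $v$ by some $v'=a\,x_3\ldots x_{\ell}\in\Gamma_{K(d,\ell-1)}^-(w)$ with $v'w\in V(PL(K(d,\ell-1)))$, i.e.\ with $a\neq x_3$ and $a\neq x_{\ell+1}$ (such an $a$ exists since $d\geq2$), and then the arc $x_1\ldots x_{\ell}\rightarrow a\,x_3\ldots x_{\ell}x_{\ell+1}$ is precisely one of the arcs added in passing from $CK(d,\ell)$ to $MCK(d,\ell)$ (with $a_2'=a$ and $a_{\ell+1}=x_{\ell+1}=x_2$). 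Letting $uv$ and $x_{\ell+1}$ vary then shows that $PL(K(d,\ell-1))$ and $MCK(d,\ell)$ have the same vertices and the same arcs. The genuine subtlety I would treat with care is that \emph{both} constructions involve choices: the partial line digraph depends on the choice of each $v'$ in the ``otherwise'' branch, and $MCK(d,\ell)$ depends on the choice of each $a_2'$; I would make explicit that these two sets of choices correspond to one another under the identification above, so that every realization of $MCK(d,\ell)$ arises as a realization of $PL(K(d,\ell-1))$, and conversely.
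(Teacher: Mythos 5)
Your argument is correct. For part~$(a)$ it is essentially the paper's proof: identify each arc of $sK(d,\ell-1)$ with the word $x_1\ldots x_{\ell}$ (so $x_i\neq x_{i+1}$ and $x_1\neq x_{\ell}$, i.e.\ a vertex of $CK(d,\ell)$) and check that line-digraph adjacency reproduces the rule $y\neq x_2,x_{\ell}$; if anything, you spell out the head-condition $x_{\ell+1}\neq x_2,x_{\ell}$ more explicitly than the paper does. The real difference is in part~$(b)$: the paper does not prove it at all, but cites an earlier paper and only records the key hint, namely that one should take as $E'$ exactly the arcs of $K(d,\ell-1)$ that are also arcs of $sK(d,\ell-1)$. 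Your choice of $E'$ (words with $x_1\neq x_{\ell}$) is precisely that set, so you are following the intended route, but you carry out the full verification the paper omits: that $E'$ is incident to every vertex (possible since $d+1\geq 3$), that the ``$vw\in V(PL)$'' branch gives exactly the $CK(d,\ell)$ arcs, that the ``otherwise'' branch gives exactly the arcs added in forming $MCK(d,\ell)$ (with $a_2'=v'$'s first letter and $a_2=a_{\ell+1}$), and that the two families of choices ($v'$ in $PL$ versus $a_2'$ in $MCK$) correspond, so every realization of one digraph is a realization of the other. This buys a self-contained proof of $(b)$ where the paper only has a reference, and the remark about matching the choices is a genuine point of care, since both constructions are defined only up to such choices.
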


\begin{proof}
\indent$(a)$ From (\ref{eq:adj-sK}) we can write the arcs $(x_1x_2\ldots x_{\ell-1},x_2\ldots x_{\ell-1}x_{\ell})$ of $sK(d,\ell-1)$ as
$x_1x_2\ldots x_{\ell-1}x_{\ell}$ with $x_i\neq x_{i+1}$ and $x_1\neq x_{\ell}$,
which corresponds to the vertices of $CK(d,\ell)$. Moreover, two arcs are adjacent in $sK(d,\ell-1)$ if
  $$
    x_1x_2\ldots x_{\ell} \quad\rightarrow\quad x_2\ldots x_{\ell}x_{\ell+1},
  $$
  where $x_1\neq x_{\ell}$, as required for the vertices of $CK(d,\ell)$.\\
\indent$(b)$ This was proved in \cite{bdh16b}. In taking the partial line digraph, it suffices to consider only the arcs in
  $K(d,\ell-1)$ that are also in $sK(d,\ell-1)$.
\end{proof}

By using spectral techniques, 
the order $n_{d,\ell}$ of a cyclic Kautz digraph $CK(d,\ell)$ was given in~\cite{bdh15,bdh16}. Here we use
a
combinatorial proof of this result.

\begin{proposition}
\label{propo0}
The order $n_{d,\ell}$ of a cyclic Kautz digraph $CK(d,\ell)$ (that coincide with the size of the subKautz digraph $sK(d,\ell-1)$) is $n_{d,1}=d+1$ and
\begin{equation}
\label{orderCK}
n_{d,\ell}=d^{\ell}+(-1)^{\ell}d\qquad \mbox{for $\ell\ge 2$}.
\end{equation}
\end{proposition}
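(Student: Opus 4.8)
The plan is to count the vertices of $CK(d,\ell)$ directly. Its vertices are exactly the words $x_1x_2\ldots x_{\ell}$ over $\mathbb{Z}_{d+1}$ with $x_i\neq x_{i+1}$ for $1\le i\le\ell-1$ \emph{and} $x_{\ell}\neq x_1$; equivalently, the proper $(d+1)$-colourings of the cycle $C_\ell$, or the closed walks of length $\ell$ in the complete graph $K_{d+1}$. The case $\ell=1$ is degenerate (the ``cyclic'' condition would read $x_1\neq x_1$), but $CK(d,1)=K(d,1)$ is the complete symmetric digraph on $d+1$ vertices, giving $n_{d,1}=d+1$; from now on I would assume $\ell\ge 2$.

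First I would record the ``linear'' count: the number of words $x_1\ldots x_{\ell}$ over $\mathbb{Z}_{d+1}$ subject only to $x_i\neq x_{i+1}$ for $1\le i\le \ell-1$ is $(d+1)d^{\ell-1}$ (choose $x_1$ in $d+1$ ways, then each further symbol in $d$ ways). Splitting this set according to whether $x_1=x_\ell$ or $x_1\neq x_\ell$ gives $(d+1)d^{\ell-1}=c_\ell+n_{d,\ell}$, where $c_\ell$ denotes the number of such words with $x_1=x_\ell$.

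Next I would evaluate $c_\ell$ by a bijection. Deleting the last symbol sends a word $x_1\ldots x_\ell$ with $x_i\neq x_{i+1}$ and $x_1=x_\ell$ to $x_1\ldots x_{\ell-1}$, which satisfies $x_i\neq x_{i+1}$ for $1\le i\le \ell-2$ and, since $x_{\ell-1}\neq x_\ell=x_1$, also $x_{\ell-1}\neq x_1$; that is, it is a vertex of $CK(d,\ell-1)$ when $\ell\ge 3$, while the set is empty for $\ell=2$. The inverse appends $x_1$ again, so the map is a bijection, whence $c_\ell=n_{d,\ell-1}$ for $\ell\ge 3$ and $c_2=0$. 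Therefore $n_{d,2}=(d+1)d=d^2+d$ and
$$
n_{d,\ell}=(d+1)d^{\ell-1}-n_{d,\ell-1}\qquad(\ell\ge 3),
$$
which is precisely the deletion--contraction recursion for the chromatic polynomial of $C_\ell$, with $(d+1)d^{\ell-1}$ the chromatic polynomial of the path $P_\ell$ evaluated at $d+1$.

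Finally I would close by induction on $\ell\ge 2$: the base case $\ell=2$ gives $d^2+d=d^{2}+(-1)^{2}d$, and assuming $n_{d,\ell-1}=d^{\ell-1}+(-1)^{\ell-1}d$ yields $n_{d,\ell}=(d+1)d^{\ell-1}-d^{\ell-1}-(-1)^{\ell-1}d=d^{\ell}+(-1)^{\ell}d$, as claimed. As an alternative one may note that $n_{d,\ell}=\operatorname{tr}\big((J-I)^{\ell}\big)$, where $J-I$ (all-ones matrix minus identity) is the adjacency matrix of $K_{d+1}$, whose eigenvalues are $d$ and $-1$ with multiplicities $1$ and $d$; this gives $d^{\ell}+d(-1)^{\ell}$ at once, but relies on the spectral viewpoint the proposition is meant to bypass. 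There is no genuine obstacle in the combinatorial argument; the only point requiring care is the behaviour at the small values $\ell\in\{1,2\}$, where the cyclic constraint either collapses to an impossible condition (handled by the separate value $n_{d,1}=d+1$) or becomes vacuous after deleting a symbol, so that the recursion must be seeded by the direct count of $n_{d,2}$ rather than by $n_{d,1}$.
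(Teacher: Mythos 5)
Your proposal is correct and follows essentially the same route as the paper: count the $(d+1)d^{\ell-1}=d^{\ell}+d^{\ell-1}$ Kautz-type words, show by a deletion bijection that those with $x_1=x_{\ell}$ are counted by $n_{d,\ell-1}$, and solve the resulting recurrence $n_{d,\ell}=d^{\ell}+d^{\ell-1}-n_{d,\ell-1}$ from the seed $n_{d,2}=d^2+d$ (the paper deletes the first symbol rather than the last, an immaterial difference). The chromatic-polynomial and trace remarks are correct but supplementary.
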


\begin{proof}
The number $N_{d,\ell}$ of sequences $x_1x_2\ldots x_{\ell}$ with $x_i\neq x_{i+1}$ for $i=1,\ldots,\ell-1$
(vertices of $K(d,\ell)$) is $d^{\ell}+d^{\ell-1}$.
Then,  to compute $n_{d,\ell}$, 
we must subtract from $N_{d,\ell}$ the number $n'_{d,\ell}$ of sequences $x_1x_2\ldots x_{\ell}$
such that $x_1=x_{\ell}$. But this is the same as the number of sequences $x_2\ldots x_{\ell}$ with $x_2\neq x_{\ell}$ and $x_i\neq x_{i+1}$ for $i=2,\ldots,\ell-1$, which is $n_{d,\ell-1}$. Consequently, we get the recurrence
\begin{equation}
\label{recur-n}
n_{d,\ell}=d^{\ell}+d^{\ell-1}-n_{d,\ell-1}\qquad \mbox{for $\ell\ge 3$}.
\end{equation}
Thus, \eqref{orderCK} follows by applying recursively \eqref{recur-n} and using that $n_{d,2}=d^2+d$.
\end{proof}

In the following result we prove a way of finding an $sK(d,\ell)$ a from the Kautz digraphs $K(d,\ell)$. We use the
cyclic Kautz digraphs $CK(d,\ell)$ in the proof.

\begin{lemma}
The subKautz digraphs $sK(d,\ell)$ can be obtained from the Kautz digraphs $K(d,\ell)$ by removing all the arcs
of the closed walks of length $\ell$ in the complete symmetric digraph $K^*_{d+1}$.
\end{lemma}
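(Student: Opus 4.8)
The plan is to identify the arcs of $K(d,\ell)$ with walks of length $\ell$ in $K^*_{d+1}$ and then read the defining conditions off directly. First I would record the basic dictionary: $K^*_{d+1}$ is the digraph on the alphabet $\Z_{d+1}$ in which every ordered pair of distinct symbols is an arc, so a walk of length $\ell$ in $K^*_{d+1}$ is precisely a sequence $x_1x_2\ldots x_{\ell+1}$ with $x_i\in\Z_{d+1}$ and $x_i\neq x_{i+1}$ for $i=1,\ldots,\ell$; such a walk is \emph{closed} exactly when $x_1=x_{\ell+1}$, and \emph{open} otherwise.

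Next I would set up the correspondence between arcs and walks. By definition an arc of $K(d,\ell)$ is a pair $(x_1x_2\ldots x_\ell,\ x_2x_3\ldots x_\ell x_{\ell+1})$ with $x_i\neq x_{i+1}$ for $i=1,\ldots,\ell-1$ and $x_{\ell+1}\neq x_\ell$; writing it as the single $(\ell+1)$-tuple $x_1x_2\ldots x_{\ell+1}$ exhibits it as a walk of length $\ell$ in $K^*_{d+1}$. Conversely, any such walk $x_1\ldots x_{\ell+1}$ has its first and last windows of length $\ell$, namely $x_1\ldots x_\ell$ and $x_2\ldots x_{\ell+1}$, equal to Kautz vertices, and the shift relation between them holds by construction, so it defines an arc of $K(d,\ell)$. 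This gives a bijection between $E(K(d,\ell))$ and the walks of length $\ell$ in $K^*_{d+1}$ — which is just the familiar line-digraph description $K(d,\ell)=L^{\ell-1}(K^*_{d+1})$.

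Then I would simply compare with the adjacency rule (\ref{eq:adj-sK}): an arc $x_1x_2\ldots x_{\ell+1}$ of $K(d,\ell)$ lies in $sK(d,\ell)$ if and only if, additionally, $x_{\ell+1}\neq x_1$, i.e. if and only if the associated walk of length $\ell$ is not closed. Since $sK(d,\ell)$ and $K(d,\ell)$ have exactly the same vertex set $V=\{x_1\ldots x_\ell:x_i\neq x_{i+1}\}$, this proves that $sK(d,\ell)$ is obtained from $K(d,\ell)$ by deleting precisely those arcs which, under the above identification, are the closed walks of length $\ell$ in $K^*_{d+1}$ — which is the assertion. As a sanity check, for $\ell=2$ this is the deletion of the arcs $x_1x_2\to x_2x_1$ (the closed walks $x_1x_2x_1$), i.e. of the digons of $K(d,2)$, recovering the remark already made in the text.

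I do not expect a genuine obstacle here; the entire argument is the bookkeeping of this arc/walk identification. The one point that deserves an explicit word is that the operation removes only arcs and no vertices: a vertex $x_1\ldots x_\ell$ with $x_1=x_\ell$ remains a perfectly legitimate vertex of $sK(d,\ell)$, so $V(sK(d,\ell))=V(K(d,\ell))$ and the claimed equality of digraphs holds on the nose rather than merely up to isolated vertices.
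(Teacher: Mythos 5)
Your proposal is correct and follows essentially the same route as the paper: both identify the arcs of $K(d,\ell)$ with walks of length $\ell$ in $K^*_{d+1}$ (equivalently, with sequences $x_1x_2\ldots x_{\ell+1}$ having consecutive symbols distinct) and observe that the adjacency rule \eqref{eq:adj-sK} removes exactly those arcs with $x_{\ell+1}=x_1$, i.e.\ the closed walks. Your write-up is merely more explicit about the bijection and the fact that no vertices are deleted, which is a welcome clarification but not a different argument.
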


\begin{proof}
From their definition, the subKautz digraphs $sK(d,\ell)$ are obtained from $K(d,\ell)$ by removing the arcs of the form
$x_1x_2\ldots x_{\ell}\rightarrow x_2\ldots x_{\ell}x_{1},$
which correspond to the vertices $x_1x_2\ldots x_{\ell}x_{1}$ of $K(d,\ell+1)$,
which in turn correspond to the closed walks of length  $\ell$ in the complete symmetric digraph $K^*_{d+1}$.
\end{proof}

A simple property of symmetry shared by all the Kautz-like digraphs is the following. The converse digraph is obtained by changing the direction of all the arcs in the original digraph.

\begin{lemma}
The Kautz digraphs $K(d,\ell)$, the subKautz digraphs $sK(d,\ell)$, and the cyclic Kautz digraphs $CK(d,\ell)$ are isomorphic to their converses.
\end{lemma}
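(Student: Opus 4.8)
The plan is to produce an explicit isomorphism in each case by exhibiting a bijection on vertices that reverses arcs, and the natural candidate is the map that reverses the order of symbols in each word. Concretely, for a vertex $x_1x_2\ldots x_{\ell}$ I would define $\phi(x_1x_2\ldots x_{\ell})=x_{\ell}x_{\ell-1}\ldots x_1$. First I would check that $\phi$ is well defined as a map on the vertex set of each digraph: the defining constraints ($x_i\neq x_{i+1}$ for the Kautz and subKautz families, plus $x_1\neq x_{\ell}$ for the cyclic Kautz family) are all symmetric under reversal of the word, so $\phi$ maps $V(G)$ bijectively onto $V(G)$ with $\phi^{-1}=\phi$.

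Next I would verify the arc-reversing property. Recall that in the converse $G^*$ the arc set is $\{(v,u):(u,v)\in E(G)\}$; so I must show $(u,v)\in E(G)$ if and only if $(\phi(v),\phi(u))\in E(G)$. Take the Kautz case: an arc $x_1x_2\ldots x_{\ell}\to x_2\ldots x_{\ell}y$ (with $y\neq x_{\ell}$) is sent by $\phi$ to the ordered pair $(yx_{\ell}\ldots x_2,\, x_{\ell}\ldots x_2 x_1)$, and this is exactly a Kautz arc of the form $z_1z_2\ldots z_{\ell}\to z_2\ldots z_{\ell}w$ with $z_1=y$, $(z_2,\ldots,z_{\ell})=(x_{\ell},\ldots,x_2)$ and $w=x_1$, whose single constraint $w\neq z_{\ell}$ reads $x_1\neq x_2$, which holds because the source was a Kautz vertex. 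For $sK(d,\ell)$ the extra constraint on an arc $x_1\ldots x_{\ell}\to x_2\ldots x_{\ell}x_{\ell+1}$ is $x_{\ell+1}\neq x_1,x_{\ell}$; under $\phi$ this becomes the arc $x_{\ell+1}x_{\ell}\ldots x_2\to x_{\ell}\ldots x_2 x_1$, and the subKautz constraint on that arc is $x_1\neq x_{\ell+1},x_2$, i.e.\ $x_1\neq x_{\ell+1}$ and $x_1\neq x_2$ — the first is the original constraint and the second is the vertex constraint, so it holds. The $CK(d,\ell)$ case is identical except that the relevant constraint on the image arc is $x_1\neq x_{\ell+1}$ (the ``$y\neq x_2$'' of the definition translates to $x_1\neq x_2$, automatic) together with the cyclic vertex condition, all of which are symmetric. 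Doing this in both directions (the ``only if'' above and the symmetric ``if'') establishes that $\phi$ is a digraph isomorphism $G\to G^*$.

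Since the three assertions are genuinely the same argument carried out for three progressively more constrained definitions, I would write the Kautz case in full and then remark that the subKautz and cyclic Kautz cases follow by observing that the additional defining inequalities are themselves invariant under reversal of the symbol sequence. There is no real obstacle here; the only thing requiring a moment's care is bookkeeping the index shifts correctly — making sure that the inequality ``$y\neq x_{\ell}$'' attached to an outgoing arc of $x_1\ldots x_{\ell}$ corresponds, after reversal, to a vertex-level inequality rather than to the arc-level inequality of the image, and vice versa. Once that correspondence is tabulated, the proof is a direct check.
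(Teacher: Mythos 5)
Your map is exactly the one the paper uses (the paper's $\Psi(x_1x_2\ldots x_\ell)=x_\ell\ldots x_2x_1$, verified there by checking $\Psi(\Gamma^+(\x))=\Gamma^-(\Psi(\x))$ rather than arc by arc), and your Kautz and subKautz verifications are correct. However, your bookkeeping for the $CK(d,\ell)$ case is off. For an arc $x_1\ldots x_\ell\rightarrow x_2\ldots x_\ell x_{\ell+1}$ of $CK(d,\ell)$ (so $x_{\ell+1}\neq x_2,x_\ell$), the image pair is $x_{\ell+1}x_\ell\ldots x_2\rightarrow x_\ell\ldots x_2x_1$, and applying the $CK$ adjacency rule (appended symbol distinct from the \emph{second} and the \emph{last} symbol of the source) the required inequalities are $x_1\neq x_\ell$ and $x_1\neq x_2$ --- the first is the cyclic vertex condition on $x_1\ldots x_\ell$ and the second the consecutive-symbols condition, so both are automatic. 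The constraint you name, $x_1\neq x_{\ell+1}$, is neither required nor always true: in $CK(d,3)$ the arc $012\rightarrow 120$ has $x_{\ell+1}=x_1=0$, yet its image $021\rightarrow 210$ is a legitimate arc. So as literally written your $CK$ step would fail (you cannot prove $x_1\neq x_{\ell+1}$), but the lemma and your overall argument survive once the translated constraints are tabulated correctly; with that correction the proposal coincides with the paper's proof.
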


\begin{proof}
Since  the mapping $\Psi(x_1x_2\ldots x_{\ell})=x_{\ell}\ldots x_2x_1$ satisfies
\begin{eqnarray*}
  \Psi(\G^+(\{x_1x_2\ldots x_{\ell}\}))&=&\Psi(\{x_2x_3\ldots x_{\ell}y\ : y\in\Z_{d+1}, y\neq x_{\ell}\})\\
  &=& \{y x_{\ell}\ldots x_3x_2: y\in\Z_{d+1}, y\neq x_{\ell}\}\\
  &=& \G^- (\{x_{\ell}\ldots x_2x_1\}) = \G^- (\Psi(\{x_1x_2\ldots x_{\ell}\})),
\end{eqnarray*}
where in the case of $CK(d,\ell)$ also $y\neq x_{2}$,
 it is an isomorphism between every of such digraphs and its converse.
\end{proof}

\section{Routing, distances and girth in $CK(d,\ell)$}

In this section, we only need to consider the cases with $d\geq3$ and $\ell\geq3$ because, as said in the Introduction, when $d=2$ the cyclic Kautz digraphs $CK(2,\ell)$ are not connected (except for the case $\ell=4$), and when $\ell=2$,
the cyclic Kautz digraphs $CK(d,2)$ coincide with the Kautz digraphs $K(d,2)$.

We begin the study of the routing and distance in $CK(d,\ell)$ with the case $d,\ell\ge 4$ and, afterwards, we deal with the case $d=3$ or $\ell=3$.

%
%

\subsection{Routing and distances when $d,\ell\ge 4$}

For simplicity, and without loss of generality, we fix the length $\ell$ of the sequences, for instance, assume that we are dealing with the cyclic Kautz digraph $CK(d,7)$ on the alphabet $\Z_{d+1}=\{0,1,\ldots,d\}$ with $d\ge 4$.

Let us consider two generic vertices:
$$
\begin{array}{ccccccccc}
\x &=& x_1&x_2&x_3&x_4&x_5&x_6&x_7, \\
\y &=& y_1&y_2&y_3&y_4&y_5&y_6&y_7,
\end{array}
$$
and the {\em extended sequence}  of $\x$, that is,
$$
\begin{array}{ccccccccccccccc}
\tilde{\x}& = & x_1&x_2&x_3&x_4&x_5&x_6&x_7&\overline{x_2}&\overline{x_3}&
\overline{x_4}&
\overline{x_5}&\overline{x_6}&\overline{x_7},
\end{array}
$$
where $\overline{x_i}\in\Z_{d+1}$ and $\overline{x_i}\neq x_i$.
(Note that we also can interpret $\tilde{\x}$ as a set of sequences of length $2\ell-1$.)
Then, to find the distance $\dist(\x,\y)$, we compute the intersection $\tilde{\x}\sqcap \y$, which is the maximum subsequence of $\tilde{\x}$ that coincides with the initial subsequence of $\y$. Analogously, the intersection $\x\sqcap \y$ is the maximum final subsequence of $\x$ that coincides with the initial subsequence of $\y$.
According to the length of such a subsequence, we distinguish three cases:

\begin{enumerate}
\item[$(a)$]
$|\tilde{\x}\sqcap \y|> \ell-1$ ($\Rightarrow \ell-1 \ge |\x\sqcap \y|\ge 1$):\\
For instance, suppose that $|\x\sqcap \y|=4$, so that we have the coincidence pattern:
$$
\begin{array}{ccccccccccccc}
x_1&x_2&x_3&x_4&x_5&x_6&x_7&\overline{x_2}&\overline{x_3}&
\overline{x_4}&
\overline{x_5}&\overline{x_6}&\overline{x_7}\\
 & & & y_1&y_2&y_3&y_4&y_5&y_6&
y_7&
 & &
\end{array}
$$
where $y_i=x_{i+3}$ for $i=1,\ldots,4$, and\\
\indent$(a1)$ $y_5\neq x_2$ and $y_5\neq y_4=x_7$,\\
\indent$(a2)$ $y_6\neq x_3,y_5$,\\
\indent$(a3)$ $y_7\neq x_4=y_1$ and $y_7\neq y_6$.\\
Then, the only shortest path from $\x$ to $\y$ is
$$
\x\!=\!x_1x_2x_3y_1y_2y_3y_4\ \rightarrow\ x_2x_3y_1y_2y_3y_4y_5\ \rightarrow\ x_3y_1y_2y_3y_4y_5y_6\ \rightarrow\ y_1y_2y_3y_4y_5y_6y_7\!=\!\y.
$$
Hence, in this case, $\dist(\x,\y)=3$ and, in general,
$$
\dist(\x,\y)=\ell-|\x\sqcap \y|\le \ell-1.
$$

\item[$(b)$] $|\tilde{\x}\sqcap \y|=\ell- 1$:\\
If $y_1\neq x_7$, we reason as in case $(a)$ and we get $\dist(\x,\y)= \ell$. Otherwise, if $y_1=x_7$, the sequence $x_2x_3\ldots x_7y_1$ does not correspond to any vertex. Then, we have to consider the `second largest' intersection satisfying the next case $(c)$: $1\leq|\tilde{\x}\sqcap \y|<\ell-1$. (Since $\ell\ge 4$, we prove later that this is always possible.) Thus, we get $\dist(\x,\y)=2\ell-1-|\tilde{\x}\sqcap \y|$.


Note that the number of vertices at distance $\ell$ is of the order of $d^{\ell}$, which also corresponds to the optimal mean distance.

\item[$(c)$] $1\leq|\tilde{\x}\sqcap \y|<\ell-1$:\\
Suppose, for instance, that $|\widetilde{\x}\sqcap \y|=3$.
$$
\begin{array}{ccccccccccccccccc}
x_1 & x_2 & x_3 & x_4 & x_5 & x_6 & x_7 & \overline{x_2} & \overline{x_3} &
\overline{x_4} & \overline{x_5} & \overline{x_6} & \overline{x_7} & & & & \\
 & & & & & &  &\overline{y_4} &\overline{y_5} &\overline{y_6} & y_1 & y_2 & y_3 & y_4 & y_5 & y_6 & y_7 \\
  & & & & & & = &z_1 &z_2 &z_3& y_1 & y_2 & y_3 & y_4 & y_5 & y_6 & y_7
\end{array}
$$
where\\
\indent$(c1)$ $z_1\neq x_7,x_2,y_4$,\\
\indent$(c2)$ $z_2\neq z_1,x_3,y_5$,\\
\indent$(c3)$ $z_3\neq z_2,x_4,y_6,y_1$,\\
\indent$(c4)$ $y_1\neq z_3,x_5$.\\
Then, $\dist(\x,\y)=10$ and, in general,
$$
\dist(\x,\y)=2\ell-1-|\tilde{\x}\sqcap \y|\le 2\ell-2.
$$
\end{enumerate}

Now we are ready to prove the following result.

\begin{theorem}
\label{theo1}
The diameter of the cyclic Kautz digraph $CK(d,\ell)$ with $d,\ell\ge 4$ is $D=2\ell-2$.
\end{theorem}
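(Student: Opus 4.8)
The plan is to show both an upper bound $D \le 2\ell-2$ and a matching lower bound $D \ge 2\ell-2$, using the case analysis $(a)$, $(b)$, $(c)$ developed just above the statement. For the upper bound, I would argue that for any two vertices $\x,\y$ of $CK(d,\ell)$, at least one of the three cases applies and yields a path of length at most $2\ell-2$. Cases $(a)$ and $(b)$ with $y_1 \ne x_{\ell}$ give distance at most $\ell$, which is $\le 2\ell-2$ for $\ell\ge 2$, so these are harmless. The only case that can push the distance up is case $(c)$ (and case $(b)$ with $y_1 = x_{\ell}$, which reduces to case $(c)$), where $\dist(\x,\y) = 2\ell-1-|\tilde\x\sqcap\y|$. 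So the crux of the upper bound is: we can always route via an intermediate segment $z_1\ldots z_k$ so that $|\tilde\x\sqcap\y|\ge 1$, i.e. the ``second largest intersection'' exists and has length at least $1$. I would verify this by constructing explicitly, symbol by symbol, an intermediate vertex $w = z_1\ldots z_{\ell}$ (or the relevant tail) that is adjacent-from $\x$ in the extended sense and whose suffix of length $\ell-1$ agrees with the prefix $y_1\ldots y_{\ell-1}$ of $\y$ — that is, hitting the $|\tilde\x\sqcap\y| = 1$ situation. The degree condition $d \ge 4$ guarantees enough free symbols at each position to satisfy the inequalities of type $(c1)$–$(c4)$ (each forbids at most three or four values out of $d+1 \ge 5$), so such a $w$ exists; hence $\dist(\x,\y) \le 2\ell-1-1 = 2\ell-2$ always.

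For the lower bound, I would exhibit a specific pair $\x,\y$ realizing distance exactly $2\ell-2$. A natural candidate: pick $\x$ and $\y$ so that no nontrivial suffix of $\tilde\x$ matches a prefix of $\y$ longer than one symbol, forcing case $(c)$ with $|\tilde\x\sqcap\y|$ as small as possible, namely $1$, giving $\dist(\x,\y) = 2\ell-2$. Concretely, taking $\x = 0101\ldots$ and $\y = 0101\ldots$ equal (or related by a one-symbol overlap) should force the long detour: starting from $\x$, the first $\ell-1$ shifts are needed to ``erase'' all of $\x$ down to a single overlapping symbol and simultaneously the inequalities $y_{i+1}\ne x_{i+1}$ (the cyclic constraint at the far end) block any shortcut, and then another $\ell-1$ shifts rebuild $\y$. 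I would check that for this pair all of cases $(a)$, $(b)$ with $y_1\ne x_\ell$ genuinely fail (there is no final-suffix-of-$\x$ equal to initial-prefix-of-$\y$ of length $\ge 1$ that yields a valid walk) so that case $(c)$ with minimal intersection is the only option.

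The main obstacle is the lower bound's verification that the chosen pair $\x,\y$ really has no shorter route — one must rule out all the ``medium-length'' paths, i.e. confirm that every candidate overlap either violates the $x_i\ne x_{i+1}$ / cyclic $x_\ell\ne x_1$ adjacency constraints of $CK(d,\ell)$ somewhere along the walk, or coincides with the $2\ell-2$ route already found. This is a finite but slightly delicate combinatorial check on the extended-sequence patterns. A secondary (easier) obstacle in the upper bound is bookkeeping the ``reduction'' of case $(b)$ with $y_1 = x_\ell$ to case $(c)$, including confirming that the intermediate vertices along the constructed path are all legitimate vertices of $CK(d,\ell)$ (no two consecutive symbols equal, and the cyclic end-condition), which again uses $d \ge 4$ to have room at each coordinate. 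Once both bounds are in place, combining them gives $D = 2\ell-2$, and one notes in passing (as foreshadowed in case $(b)$) that the number of vertices at the non-extremal distance $\ell$ is of order $d^\ell$, matching the order of $n_{d,\ell}$, which is the observation exploited later for mean distance.
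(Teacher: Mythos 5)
Your upper bound follows the paper's route: the case analysis $(a)$--$(c)$, the observation that the problematic situation (case $(b)$ with $y_1=x_\ell$) falls back on a second-largest intersection, and the use of $d\ge 4$ so that the constraints of type $(c1)$--$(c4)$ (each forbidding at most four of the $d+1\ge 5$ symbols) can always be met. The paper makes the ``intersection at least $1$'' step slightly sharper — if $y_1=x_\ell$, then the alignment of length $2$ is automatically consistent because $y_1=x_\ell\neq x_{\ell-1}$ and $y_2\neq y_1=x_\ell$, and since $\ell\ge 4$ this length-$2$ intersection is still $<\ell-1$, so case $(c)$ applies — but in substance your argument for $\dist(\x,\y)\le 2\ell-2$ is the same as the paper's.

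The genuine gap is in the lower bound: the witness pair you propose cannot work. If $\x=\y$ then $\dist(\x,\y)=0$, so an ``equal'' pair says nothing about the diameter; and if $\y$ is the same alternating $0101\ldots$ word arranged to overlap $\x$ in one symbol (say $\x=0101\ldots 012$ and $\y=2\,0101\ldots$), then that one-symbol match lies in the genuine (non-extended) part of $\x$, the concatenation $x_1\ldots x_\ell y_2\ldots y_\ell$ has every length-$\ell$ window a legitimate vertex of $CK(d,\ell)$, and hence $\dist(\x,\y)\le \ell-1$ — nowhere near $2\ell-2$. Reusing the same two symbols in both vertices is exactly what creates long consistent overlaps with $\tilde{\x}$ and therefore short routes. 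To force $|\tilde{\x}\sqcap\y|=1$ you must block every alignment of length $t\ge 2$: in the extended region this means that for each $2\le t\le\ell-1$ some $y_i$ coincides with the forbidden value $x_{\ell-t+i}$, which drives you to build $\y$ on a \emph{different} alternating pattern from $\x$. This is what the paper does: for $\ell$ even it takes $\x=1010\ldots1012$ and $\y=0202\ldots02$, for $\ell$ odd it takes $\x=0101\ldots012$ and $\y=0202\ldots021$, and then checks directly that $|\tilde{\x}\sqcap\y|=1$, giving $\dist(\x,\y)=2\ell-1-1=2\ell-2$. Without a concrete pair of this kind (and the finite verification you yourself flag as the main obstacle), the lower bound, and hence the equality $D=2\ell-2$, is not established.
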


\begin{proof}
First, we claim that $|\tilde{\x}\sqcap \y|\ge 1$. Indeed, on the contrary,  we would have that $y_1=x_7\neq x_6$ and $y_2\neq y_1=x_7$. Consequently,
$|\tilde{\x}\sqcap \y|\ge 2$, a contradiction.
Then, if $|\tilde{\x}\sqcap \y|= 1$, we are in case $(c)$. Otherwise, from the above reasoning, we  have at least an intersection $|\tilde{\x}\sqcap \y|=2<\ell-1$, as $\ell\ge 4$, and case $(c)$ applies again.

Finally, the existence of two vertices $\x$ and $\y$ at maximum distance is as follows. We have two cases:\\
If $\ell$ is even, consider the vertices $\x=1010\ldots1012$ and $\y=0202\ldots02$.\\
If $\ell$ is odd, consider the vertices $\x=0101\ldots012$ and $\y=0202\ldots021$.\\
Then, in both cases it is easily checked that $|\tilde{\x}\sqcap \y|=1$ and, hence, $\dist(\x,\y)=2\ell-2$.
\end{proof}

Fiol, Yebra, and Alegre~\cite{fya84} proved that if the diameter of any digraph (different from a directed cycle) is $D$, then the diameter of its line digraph is $D+1$.
Since $CK(d,\ell)$ are the line digraphs of the subKautz digraphs $sK(d,\ell-1)$, the diameter of the former is one unit more than the latter.

\begin{corollary}
\label{coro0}
The diameter of the subKautz digraph $sK(d,\ell)$ with $d\ge 4$ and $\ell\ge 3$ is $2\ell-1$.
\end{corollary}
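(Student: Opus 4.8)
The plan is to deduce this directly from Theorem~\ref{theo1}, the line digraph identity $CK(d,\ell)=L(sK(d,\ell-1))$ established above, and the classical fact of Fiol, Yebra and Alegre~\cite{fya84} that taking the line digraph of a digraph other than a directed cycle increases the diameter by exactly one.

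First I would reindex the line digraph identity: replacing $\ell$ by $\ell+1$ gives $CK(d,\ell+1)=L(sK(d,\ell))$. Since $d\ge 4$ and $\ell\ge 3$, the pair $(d,\ell+1)$ satisfies $d,\ell+1\ge 4$, so Theorem~\ref{theo1} applies and the diameter of $CK(d,\ell+1)$ equals $2(\ell+1)-2=2\ell$. Next I would check that $sK(d,\ell)$ is not a directed cycle: by the remarks following~\eqref{eq:adj-sK}, every vertex of $sK(d,\ell)$ has out-degree $d$ or $d-1$, hence at least $3>1$ because $d\ge 4$; and $sK(d,\ell)$ is strongly connected, since its line digraph $CK(d,\ell+1)$ is (it has finite diameter), and strong connectivity of a line digraph implies strong connectivity of the original digraph. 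Therefore the Fiol--Yebra--Alegre result yields $D(CK(d,\ell+1))=D(sK(d,\ell))+1$, whence $D(sK(d,\ell))=2\ell-1$, as claimed.

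The only point requiring care is the bookkeeping of parameter ranges: one must ensure that the shift $\ell\mapsto\ell+1$ keeps us inside the hypotheses $d,\ell\ge 4$ of Theorem~\ref{theo1}, which is precisely why the corollary is stated for $\ell\ge 3$ rather than $\ell\ge 2$. I do not expect any genuine obstacle here, since essentially all of the combinatorial work was already carried out in the proof of Theorem~\ref{theo1}. (Alternatively, one could give a self-contained argument by repeating the three-case analysis of the intersection $\tilde{\x}\sqcap\y$ directly inside $sK(d,\ell)$, now using an extended sequence of length $2\ell-1$ and the adjacency rule \eqref{eq:adj-sK} in place of that of $CK$; but routing through the line digraph is considerably shorter.)
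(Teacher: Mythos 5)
Your proposal is correct and follows essentially the same route as the paper: the paper likewise deduces the corollary from Theorem~\ref{theo1} together with the identity $CK(d,\ell)=L(sK(d,\ell-1))$ and the Fiol--Yebra--Alegre result that taking line digraphs (of a digraph other than a directed cycle) increases the diameter by exactly one. Your extra verification that $sK(d,\ell)$ is not a directed cycle and is strongly connected is a harmless added detail that the paper leaves implicit.
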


\subsection{Routing and distances when $d=3$ or $\ell=3$}

\begin{figure}[t]
    \begin{center}
  \includegraphics[width=12cm]{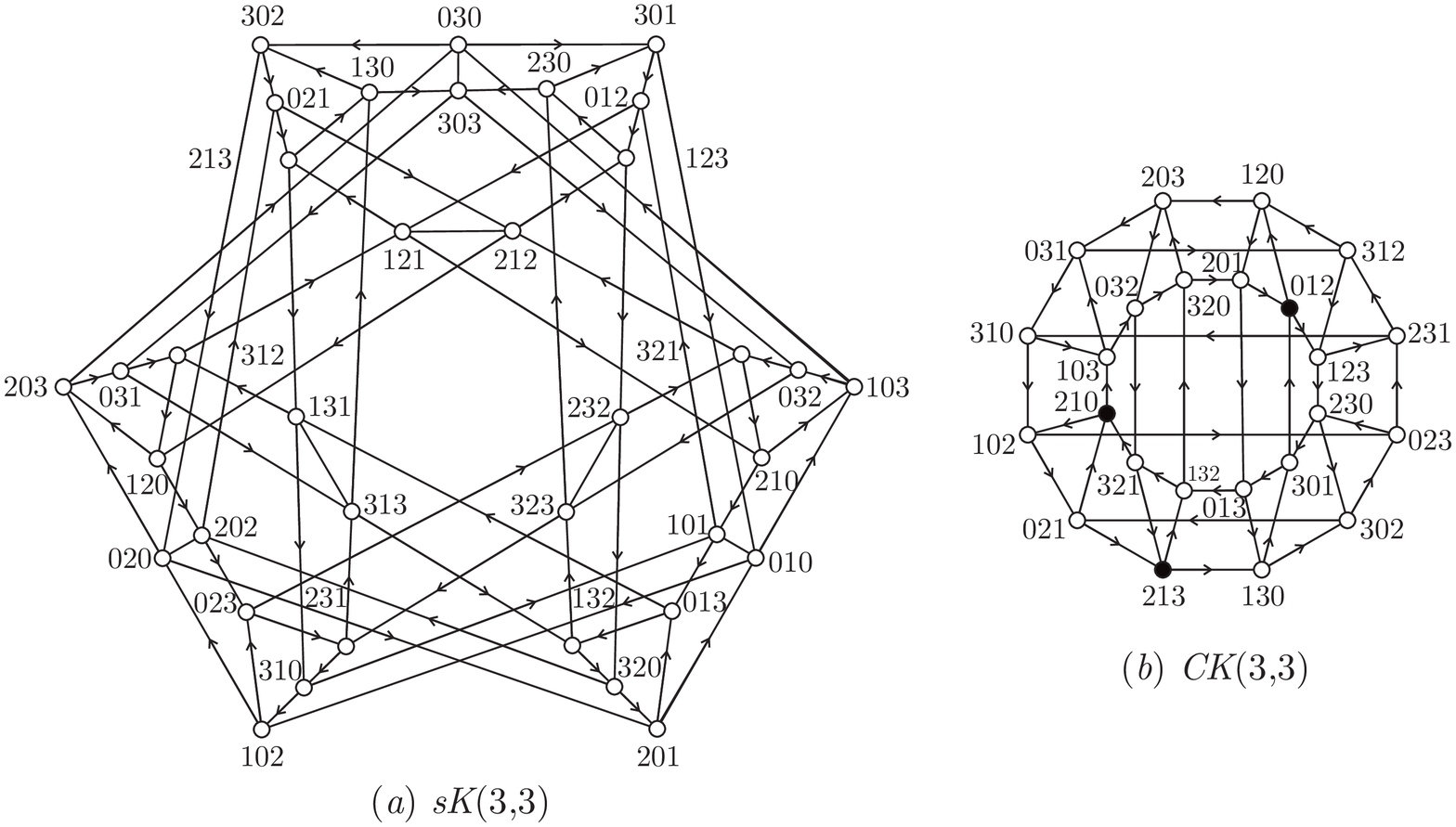}
  \end{center}
  \caption{$(a)$ The subKautz digraph $sK(3,3)$ whose line digraph is $CK(3,4)$ (the lines without direction represent two arcs with opposite directions). $(b)$ The cyclic Kautz digraph $CK(3,3)$ with 24 vertices and diameter 5 (the vertices at maximum distance from 012 are 210 and 213).}
  \label{fig:sK(3,3)+CK(3,3)}
\end{figure}

Looking at the case $(c3)$ above, if $d=3$ and all the elements $z_2, x_4, y_6, y_1$ are different, then $z_3$ has no possible value.
Analogously, if $\ell=3$, there must exist two vertices $\x=x_1x_2x_3$ and $\y=y_1y_2y_3$, such that $|\tilde{\x}\sqcap \y|=2$ (not smaller than $\ell-1$), and with $y_1=x_3$. Thus, neither of the strategies  in the above cases $(c)$ and $(b)$ can be applied. However, the following reasoning shows that we always can find a path of length $2\ell-1$. First, we deal with the case $d=3$, where for simplicity we assume that $\ell=5$.

\begin{enumerate}
\item[$(d)$] We reason as if $|\tilde{\x}\sqcap \y|=0$:\\
$$
\begin{array}{cccccccccccccc}
x_1 & x_2 & x_3 & x_4 & x_5 & \overline{x_2} & \overline{x_3} & \overline{x_4} & \overline{x_5} &     &     &     &     &     \\
    &     &     &     &     & \overline{y_1} & \overline{y_2} & \overline{y_3} & \overline{y_4} & y_1 & y_2 & y_3 & y_4 & y_5 \\
    &     &     &     & =   & z_1            & z_2            & z_3            & z_4            & y_1 & y_2 & y_3 & y_4 & y_5
\end{array}
$$
where we would need the following conditions:\\
\indent$(d1)$ $z_1\neq x_2,x_5,y,1$,\\
\indent$(d2)$ $z_2\neq z_1,x_3,y_2$,\\
\indent$(d3)$ $z_3\neq z_2,x_4,y_3$,\\
\indent$(d4)$ $z_4\neq z_3,x_5,y_4,y-1$.\\
If $d\geq4$ (for $\ell=3$), this conditions can always be fulfilled, and the required path is guaranteed.

If $d=3$, and either $y_1=x_2$, or $y_2=x_3$, or $y_3=x_4$, or $y_4=x_5$, or $y_1=x_5$, then there is always a possible choice of $z_1,z_2,z_3$ and $z_4$ in $\Z_4$.
Consequently, $\dist(\x,\y)\leq9$. Otherwise, if $y_i\neq x_{i+1}$ for $i=1,\ldots,4$ and $y_1\neq x_5$, we can reason as if $|\tilde{\x}\sqcap \y|=4(=\ell-1)$. In this case,
the path from $\x$ to $\y$ is:
$$
\x=x_1 x_2 x_3 x_4 x_5 \ \rightarrow\  x_2 x_3 x_4 x_5y_1\ \rightarrow\  x_3 x_4 x_5y_1y_2 \ \rightarrow\  \cdots \ \rightarrow\  y_1 y_2 y_3 y_4 y_5=\y,
$$
which implies that $\dist(\x,\y)\le 5$.

Thus, in any case,
$$
\dist(\x,\y)\le 2\ell-1.
$$
\end{enumerate}

This leads to the following result.

\begin{proposition}
\label{propo1}
\indent$(i)$ The diameter of the cyclic Kautz digraphs $CK(3,\ell)$ with $\ell\neq 4$ and that of $CK(d,\ell)$ with $\ell=3$ is $2\ell-1$.\\
\indent$(ii)$ The diameter of the cyclic Kautz digraph  $CK(3,4)$ is $2\ell-2=6$.
\end{proposition}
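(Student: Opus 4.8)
The plan is to get the upper bounds from the routing analysis of cases $(a)$--$(d)$ (sharpening it in the borderline case $\ell=4$), and the lower bounds from explicit extremal pairs checked through the description of walks as letter sequences. For part $(i)$ nothing new is required for the upper bound: the discussion of case $(d)$ already gives $\dist(\x,\y)\le 2\ell-1$ for \emph{every} ordered pair when $d=3$ or $\ell=3$, so $D(CK(3,\ell))\le 2\ell-1$ and $D(CK(d,3))\le 5$. For part $(ii)$ this has to be improved to $D(CK(3,4))\le 6$, and the cleanest route is via the identity $CK(3,4)=L(sK(3,3))$ together with the theorem of Fiol, Yebra and Alegre~\cite{fya84} (a line digraph of a digraph which is not a directed cycle has diameter one larger): it then suffices to show $D(sK(3,3))=5$. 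Describing walks in $sK(3,\ell)$ as sequences $s_1s_2\cdots$ with $s_j\ne s_{j+1}$ and $s_j\ne s_{j+\ell}$, the distance from $\x=x_1x_2x_3$ to $\y=y_1y_2y_3$ in $sK(3,3)$ is computed by a short case analysis on the walk length $t$: separating the case $y_1=x_3$ (where a walk of length $4$ always exists) from $y_1\ne x_3$ (where a walk of length $3$ or of length $5$ always exists) gives $\dist(\x,\y)\le 5$ for all $\x\ne\y$, and equality is attained, so $D(sK(3,3))=5$.

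For the lower bound in part $(i)$ with $\ell=3$ and any $d\ge3$, take $\x=012$ and $\y=210$. In the sequence description of walks in $CK(d,3)$---each length-$3$ window being a vertex, so consecutive letters differ and so do the first and last letter of each window---any walk of length $t$ has $s_3=x_3=2$ and $s_{t+1}=y_1=2$, and a direct check on $t\in\{1,2,3,4\}$ shows that these two forced $2$'s always produce an illegal window (for $t=4$, for instance, the window $s_3s_4s_5=2\,s_4\,2$ has equal first and last letter). Since $s_1\cdots s_8=01230210$ is a legal walk, $\dist(\x,\y)=5=2\ell-1$, matching Figure~\ref{fig:sK(3,3)+CK(3,3)}$(b)$. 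For $d=3$ and $\ell\ge5$ (both parities) one uses the analogue of the extremal pairs of Theorem~\ref{theo1}, modified so that in addition $y_1=x_\ell$; this last condition disables the short option of case $(d)$ (which needs $y_i\ne x_{i+1}$ for all $i$ together with $y_1\ne x_\ell$), while the four-letter clash of condition $(c3)$ blocks every rerouting of length below $2\ell-1$, so that the length-$(2\ell-1)$ detour of case $(d)$ is the shortest walk. Finally $D(CK(3,4))\ge 6$ is witnessed by the lift, under $CK(3,4)=L(sK(3,3))$, of any diametral pair of $sK(3,3)$ (for instance the vertices $2010$ and $1203$, coming from the pair $010,120$ of $sK(3,3)$, which lies at distance $5$ there).

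The delicate step is the lower bound for $CK(3,\ell)$ with $\ell\ge5$: one has to certify that the chosen pair admits \emph{no} walk shorter than $2\ell-1$, that is, that every variant of cases $(a)$--$(d)$ is obstructed for every admissible length of $\tilde{\x}\sqcap\y$, and this is exactly the place where having only $d+1=4$ letters is essential. The mirror subtlety is the upper bound in $(ii)$, where one needs the same obstruction to \emph{fail} to appear when $\ell=4$; this borderline behaviour is dealt with most transparently by the reduction to $sK(3,3)$ above.
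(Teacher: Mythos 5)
Your overall route coincides with the paper's: the upper bounds come from the case-$(d)$ routing analysis and, for part $(ii)$, from the identity $CK(3,4)=L(sK(3,3))$ together with the fact that taking line digraphs increases the diameter by one, while the lower bounds are certified by extremal pairs read off the sequence description of walks. Your treatment of $\ell=3$ is sound and essentially self-contained: the pair $012,210$, the forced-letter argument excluding walks of length $t\le 4$, and the explicit walk $01230210$ give that the diameter of $CK(d,3)$ is at least $5$ for every $d\ge3$ (the paper argues the same way with the generic pair $x_1x_2x_3$, $x_3x_2y_3$), and your split into the cases $y_1=x_3$ and $y_1\ne x_3$ does yield $D(sK(3,3))=5$ (in the blocking subcase of length $5$ one is forced to $\x=\y$), hence part $(ii)$ follows as in the paper.

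The genuine gap is the lower bound for $CK(3,\ell)$ with $\ell\ge 5$, which you yourself identify as the delicate step and then do not carry out. You neither specify the extremal pairs nor verify that they admit no walk of length at most $2\ell-2$. The assertion that taking the pairs of Theorem~\ref{theo1} and additionally imposing $y_1=x_\ell$ makes the four-letter clash of condition $(c3)$ block every shorter rerouting is precisely what must be proved, and it is not automatic: one has to exclude all walks of every length up to $2\ell-2$, not only those following the canonical patterns $(a)$--$(c)$, and whether the clash in $(c3)$ is unavoidable depends on the concrete pair chosen. That this choice is itself delicate is visible in the paper's own witnesses, $\x=0101\ldots012$, $\y=21010\ldots10$ for odd $\ell$ and $\x=102020\ldots2012$, $\y=2130202\ldots02010$ for even $\ell$; the latter is not obtained from the Theorem~\ref{theo1} pair by merely forcing $y_1=x_\ell$. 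The paper then relies on \cite{bdh16}, where these pairs are proved to be at distance $2\ell-1$. To close your argument you would have to either exhibit concrete pairs and reproduce that verification, or cite it as the paper does.
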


\begin{proof}
\indent$(i)$ We only need to exhibit two vertices at distance  $2\ell-1$. For $CK(3,\ell)$ with $\ell\geq5$,
when $\ell$ is odd, we can take the vertices $\x={0 1 0 1 {\ldots}0 1 2}$ and $\y = {2 1 0 1 0 {\ldots}  1 0}$. When $\ell$ is even, two vertices at maximum distance are
$\x={1 0 2 0 2 0 {\ldots} 2 0 1 2}$ and $\y={2 1 3 0 2 0 2 {\ldots} 0 2 0 1 0}$. In both cases,
it was proved that these vertices are at maximum distance in \cite{bdh16}.
The case of the cyclic Kautz digraph $CK(3,3)$, shown in Figure \ref{fig:sK(3,3)+CK(3,3)} $(b)$, can be easily checked to have diameter $2\ell-1=5$, for instance, the vertices
at maximum distance from $012$ are $210$ and $213$. In general,
for $CK(d,3)$, we show that two vertices at maximum distance 5 are $\x=x_1 x_2 x_3$ and $\y=x_3 x_2 y_3$ as follows.
If this distance were 2, then we would get the sequence $x_1 x_2 x_3x_2y_3$, but $x_2 x_3x_2$ is not a vertex of $CK(d,3)$.
If this distance were 3, then we would get the sequence $x_1 x_2 x_3x_3x_2y_3$, but $x_2 x_3x_3$ is not a vertex of $CK(d,3)$.
If this distance were 4, then we would get the sequence $x_1 x_2 x_3y_1x_3x_2y_3$, but $x_3 y_1x_3$ is not a vertex of $CK(d,3)$.
Then, the distance is 5, with the sequence $x_1 x_2 x_3y_1y_2x_3x_2y_3$.\\
\indent$(ii)$ The cyclic Kautz digraph $CK(3,4)$ on 84 vertices with labels $x_1x_2x_3x_4$, $x_i\in\Z_4$, is the line digraph of the subKautz digraph $sK(3,3)$ shown in Figure \ref{fig:sK(3,3)+CK(3,3)} $(a)$.
Then, since $sK(3,3)$ has diameter $5$, we conclude that $CK(3,4)$ has diameter $6$, as claimed.
\end{proof}

\begin{corollary}
\label{coro1}
\indent$(i)$ The diameter of the subKautz digraphs $sK(d,\ell)$ with either $d=3$ and $\ell\ge 4$ or $d\ge 3$ and $\ell=2$ is $2\ell$.\\
\indent$(ii)$ The diameter of the subKautz digraph $sK(3,3)$ is $2\ell-1=5$.
\end{corollary}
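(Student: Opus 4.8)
The plan is to deduce everything from the identity $CK(d,\ell)=L(sK(d,\ell-1))$ established earlier, together with the fact (Fiol, Yebra and Alegre~\cite{fya84}, already quoted above) that the line digraph of a digraph different from a directed cycle with diameter $D$ has diameter $D+1$, and the diameter values for the cyclic Kautz digraphs obtained in Proposition~\ref{propo1}.

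First I would check that the relevant subKautz digraphs are not directed cycles, so that the line-digraph shift of the diameter applies. For $d\ge 3$ the out-degree of every vertex of $sK(d,\ell)$ is either $d$ or $d-1\ge 2$, hence $sK(d,\ell)$ is never a directed cycle; in particular this covers $sK(d,2)$ with $d\ge 3$, $sK(3,\ell)$ with $\ell\ge 4$, and $sK(3,3)$.

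For part $(i)$: if $d=3$ and $\ell\ge 4$, then $CK(3,\ell+1)=L(sK(3,\ell))$ with $\ell+1\ge 5\ne 4$, so by Proposition~\ref{propo1}$(i)$ the diameter of $CK(3,\ell+1)$ equals $2(\ell+1)-1=2\ell+1$; subtracting one unit, the diameter of $sK(3,\ell)$ is $2\ell$. If $d\ge 3$ and $\ell=2$, then $CK(d,3)=L(sK(d,2))$, and Proposition~\ref{propo1}$(i)$ (the case $\ell=3$) gives that the diameter of $CK(d,3)$ is $2\cdot 3-1=5$; hence the diameter of $sK(d,2)$ is $4=2\ell$. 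For part $(ii)$: $CK(3,4)=L(sK(3,3))$, and Proposition~\ref{propo1}$(ii)$ gives that the diameter of $CK(3,4)$ is $6$, whence the diameter of $sK(3,3)$ is $5=2\ell-1$.

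No serious obstacle is expected, since this is essentially a bookkeeping corollary; the only point requiring a moment's care is keeping the index shift $\ell\leftrightarrow\ell-1$ consistent and, in the borderline case, noting that it is precisely the exceptional cyclic Kautz digraph $CK(3,4)$ that sits above $sK(3,3)$, which is what produces the exceptional value $2\ell-1$ in $(ii)$ rather than $2\ell$.
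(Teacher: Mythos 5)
Your part $(i)$ is correct and is exactly the paper's (implicit) argument: since $CK(d,\ell)=L(sK(d,\ell-1))$ and the line digraph of a digraph other than a directed cycle has diameter one unit larger, the values follow from Proposition~\ref{propo1}$(i)$ with the index shift you describe, and your observation that every vertex of $sK(d,\ell)$ has out-degree $d$ or $d-1\ge 2$ correctly rules out the directed-cycle exception. For part $(ii)$, though, note that the paper's logical dependence runs in the opposite direction: the proof of Proposition~\ref{propo1}$(ii)$ takes as its starting point that $sK(3,3)$ (the $36$-vertex digraph of Figure~\ref{fig:sK(3,3)+CK(3,3)}$(a)$) has diameter $5$, verified directly, and from this concludes that $CK(3,4)=L(sK(3,3))$ has diameter $6$. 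So deducing the diameter of $sK(3,3)$ from Proposition~\ref{propo1}$(ii)$ is circular within the paper's development; it is formally acceptable only if one treats that proposition as a black box. A self-contained proof of $(ii)$ must include the direct verification (by inspection of the small digraph, or an explicit pair of vertices at distance $5$ together with an upper bound) that $sK(3,3)$ has diameter $5$, which is what the paper in effect supplies inside the proof of Proposition~\ref{propo1}$(ii)$.
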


See Figure~\ref{fig:diametre-sK-CK} for a summary of the diameters of $sK(d,\ell)$ and $CK(d,\ell)$.

\begin{figure}[t]
\begin{center}
  \includegraphics[width=14.5cm]{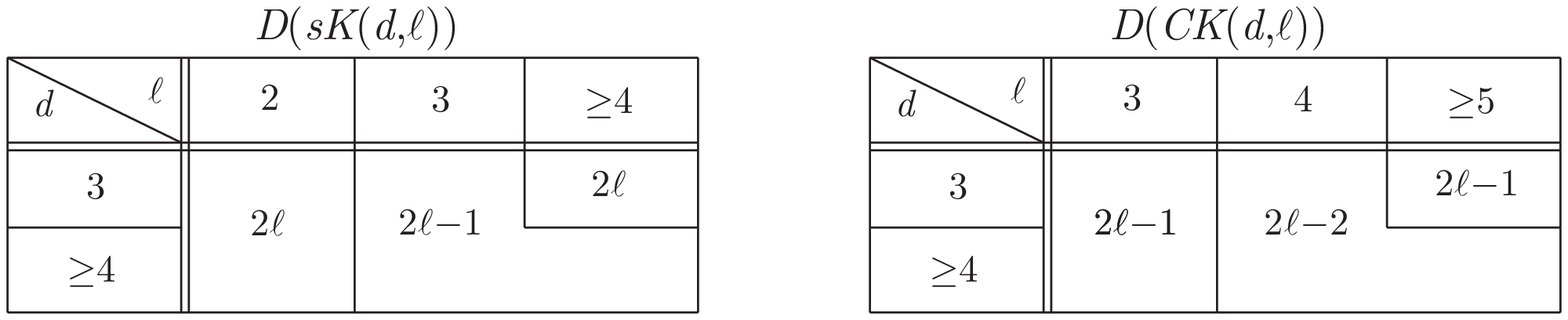}
  \caption{Summary of the diameters of $sK(d,\ell)$ and $CK(d,\ell)$, depending on the values of $d$ and $\ell$.}
  \label{fig:diametre-sK-CK}
  \end{center}
\end{figure}

\subsection{The girth}

Now we give a lower bound on the girth of a cyclic Kautz digraph $CK(d,\ell)$.

\begin{lemma}
\label{lemma:girth}
The girth $g$ of the cyclic Kautz digraph $CK(d,\ell)$ is at least the minimum positive integer $k$ such that $\ell$ is not congruent with $1\ (\emph{mod}\ k)$.
\end{lemma}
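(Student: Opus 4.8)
The plan is to analyze what a directed cycle in $CK(d,\ell)$ looks like at the level of the underlying symbol sequences, and then extract an arithmetic condition on its length. A closed walk of length $k$ in $CK(d,\ell)$ that starts and ends at the same vertex $\x = x_1 x_2 \ldots x_\ell$ corresponds to a cyclic sequence of symbols $x_1 x_2 \ldots x_\ell x_{\ell+1} \ldots x_{\ell+k}$ with $x_{\ell+k+i} = x_i$ (indices read cyclically, so the whole thing is a periodic word of period $k$ on the circle $\Z_{k}$), subject to the two local constraints coming from the adjacency rule of $CK(d,\ell)$: consecutive symbols differ, $x_j \neq x_{j+1}$, and the ``cyclic Kautz'' restriction $x_j \neq x_{j+\ell-1}$ for all $j$ (this is the condition $y\neq x_2,x_\ell$ rewritten along the walk; equivalently every length-$\ell$ window is a $CK$-vertex, so its first and last symbols disagree). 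First I would make this translation precise: a directed cycle of length $k$ through a vertex exists if and only if there is a map $f:\Z_k \to \Z_{d+1}$ with $f(j)\neq f(j+1)$ and $f(j)\neq f(j+\ell-1)$ for all $j$, where the first is nontrivial only because the window must also realize a genuine vertex (simple digraph, no repeats forced beyond these).

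Next I would reduce the obstruction to a coincidence between the two ``forbidden shifts.'' Both forbidden conditions say $f$ takes different values on two points of $\Z_k$ at distance $1$ and at distance $\ell-1$ respectively. The only way a short cycle can be blocked is when these two shifts coincide in $\Z_k$, i.e.\ when $\ell - 1 \equiv 1 \pmod{k}$ forces $f(j)\neq f(j)$, an immediate contradiction — but wait, that gives $\ell\equiv 2$; more carefully the real clash is $\ell-1\equiv 0$ or the distance-$1$ and distance-$(\ell-1)$ constraints becoming the same constraint or contradictory. Let me restate: the relevant bad case is $\ell - 1 \equiv 0 \pmod k$, equivalently $\ell \equiv 1 \pmod k$, since then the constraint $f(j)\neq f(j+\ell-1)$ reads $f(j)\neq f(j)$, which is impossible, so no cycle of that length $k$ can exist. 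Conversely, if $\ell \not\equiv 1 \pmod k$, I would exhibit a valid $f$: for instance a suitable ``rotation'' colouring or an explicit periodic assignment on $\Z_k$ using the alphabet $\Z_{d+1}$ (here $d\ge 3$ gives room), constructed so that $f(j)\neq f(j+1)$ and $f(j)\neq f(j+\ell-1)$ both hold — the simplest being to colour $\Z_k$ so that equal colours occur only at distances that are not $1$ and not congruent to $\ell-1$, which is possible once $k \nmid \ell-1$ because then $\gcd$-type arguments let us 2- or 3-colour the circle avoiding those two offsets. This shows $k$ is achievable, hence $g \le$ that value is not what we want — we want the lower bound, so actually only the first (impossibility) direction is needed for the stated lemma, and the construction is a sanity check that the bound is the right order of magnitude.

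So the core of the argument is just: for every $k$ with $\ell \equiv 1 \pmod k$, there is no directed $k$-cycle, because traversing such a cycle and using the $CK$-adjacency rule $\ell-1$ times returns us to a window whose first and last symbols are the same symbol $x_j$, contradicting $x_j \neq x_{j+\ell-1}$; therefore $g$ is at least the least $k$ for which $\ell \not\equiv 1 \pmod k$. The main obstacle I anticipate is bookkeeping the index arithmetic cleanly — being careful that ``length $k$'' means the walk visits $k$ arcs and hence shifts the window by $k$ positions, so periodicity is modulo $k$ and the forbidden-distance condition $j \mapsto j+\ell-1$ must be read in $\Z_k$; getting the congruence to come out as $\ell \equiv 1 \pmod k$ rather than $\ell \equiv 0$ or $\ell-1 \equiv 1$ requires tracking exactly which window-endpoints coincide. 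A secondary subtlety is confirming that no \emph{other} congruence obstructs a cycle of length $k$ (e.g.\ that $x_j \neq x_{j+1}$ alone never kills a cycle when $k\ge 2$), so that the bound is stated for the right value; this follows since for $k\ge 2$ one can always avoid the single offset $1$, and the genuine constraint is the interaction with the offset $\ell-1$.
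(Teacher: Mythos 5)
Your argument is correct and essentially the same as the paper's: a closed walk of length $k$ forces the symbol sequence to be periodic modulo $k$, and since every length-$\ell$ window along the walk is a vertex of $CK(d,\ell)$ (so its first and last symbols differ), this is contradictory whenever $\ell-1\equiv 0\pmod k$, i.e.\ $\ell\equiv 1\pmod k$, which rules out all cycle lengths below the stated minimum $k$. The only caveat is your side remark that a valid colouring of $\Z_k$ avoiding the offsets $1$ and $\ell-1$ always exists once $\ell\not\equiv 1\pmod k$ — this can fail for small $d$ (e.g.\ $CK(3,13)$ has girth $7$, not $5$) — but you correctly flag that this converse direction is not needed for the lemma.
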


\begin{proof}
A cycle of minimum length $g$, rooted to a vertex $\x$, corresponds to a path from $\x$ to
$\x$ of the same length. This means that the maximum  length of the (nontrivial) intersection $\x\sqcap\x$ is $\ell-g$. For instance, with $\ell=7$ and $g=4$ we would have the intersection pattern
$$
\begin{array}{ccccccccccccc}
x_1 & x_2 & x_3 & x_4 & x_5 & x_6 & x_7 & \overline{x_2} & \overline{x_3} & \overline{x_4} & \overline{x_5}& \overline{x_6} & \overline{x_7}\\
    &     &     &     & x_1 & x_2 & x_3 & x_4            & x_5            & x_6            & x_7.          &                &
\end{array}
$$
Then, in general, this means that the sequence representing $\x$ is periodic:
$x_i=x_{i+g}$ for every $i=1,2,\ldots,\ell-g$.
Now, if $\ell\equiv r\ (\textrm{mod}\ g)$, then $x_{\ell}=x_r$, which is possible if $r\neq 1$, and in this case the cycle would be
\begin{eqnarray*}
\x &= & x_1x_2\ldots x_g\ldots x_1x_2\ldots x_g x_1x_2\ldots x_{r}\\
  &\rightarrow& x_2\ldots x_g\ldots x_1x_2\ldots x_g x_1x_2\ldots x_{r} x_{r+1}\\
   &\rightarrow& \cdots \ \rightarrow \  x_{g-r+1}\ldots x_g \ldots x_1x_2\ldots x_g x_1x_2\ldots x_{r}x_{r+1}\ldots x_{g}\\
   &\rightarrow& x_{g-r+2}\ldots x_g \ldots x_1x_2\ldots x_g x_1x_2\ldots x_{r}x_{r+1}\ldots x_{g}x_1\\
   &\rightarrow& \cdots \ \rightarrow \  x_1x_2\ldots x_{g}\ldots x_1x_2\ldots x_{r}x_{r+1}\ldots x_{g}x_1\ldots x_r=\x.
\end{eqnarray*}
This completes the proof.
\end{proof}

Note that the girth reaches the bound when there exists a vertex $\x$ that satisfies the cases $(a)$, $(b)$, $(c)$ or $(d)$ (given at the beginning
of this section) for the existence of a path of length $g$ from $\x$ to $\y=\x$. In particular, this is fulfilled if $d$ is large enough.
As an example, if $\ell=13$ Lemma~\ref{lemma:girth} gives $g\geq5$. However, a possible vertex $\x$ only exists for $d\geq4$. Indeed, assume
that $\x=x_1x_2x_3x_4x_5x_1x_2x_3x_4x_5x_1x_2x_3$, where $x_i\in\Z_4$ for $i=1,\ldots,5$. Since $x_2\neq x_1$ and $x_3\neq x_2,x_1$,
we can take, without loss of generality $\x=012x_4x_5012x_4x_5 012$. Then, a path of length $g=5$ from $\x$ to $\x$ should be
\begin{eqnarray*}
\x&=&012x_4x_5012x_4x_5 012 \ \rightarrow\  12x_4x_5012x_4x_5 012x_4 \ \rightarrow\  2x_4x_5012x_4x_5 012x_4x_5\\
&&x_4x_5012x_4x_5 012x_4x_5 0 \ \rightarrow\  x_5012x_4x_5 012x_4x_5 01 \ \rightarrow\  012x_4x_5012x_4x_5 012\ =\ \x.
\end{eqnarray*}
Therefore, since $x_4\neq2,1$ and $0\neq x_4$, then $x_4=3$. Moreover, since $x_5\neq x_4,2$, $0\neq x_5$, and $1\neq x_5$, then $x_5\not\in\{0,1,2,3\}$, which is a contradiction.
In fact, when $d=3$, it turns out that $CK(3,13)$ has girth $g=7$, for example, with the vertex $\x=0120123012012$.

A direct consequence of this result is that there exist cyclic Kautz digraphs with arbitrarily large girth. Indeed, if $\ell=\textrm{lcm}(2,3,\ldots,n)+1$, we have that
$\ell=1\ (\textrm{mod}\ i)$ for every $i=2,3,\ldots,n$. Then, according to Lemma~\ref{lemma:girth}, $CK(d,\ell)$ must have girth $g>n$.

It is known that if a digraph $G$ has girth $g$, then its line digraph $L(G)$ also has girth $g$, see F\`{a}brega and Fiol~\cite{FaFi89}. Since $L(sK(d,\ell))=CK(d,\ell+1)$,
both digraphs have the same girth.

\section{Connectivity and superconnectivity}

It is well-known that the Kautz digraphs $K(d,\ell)$  have maximal (edge- and vertex-) connectivities (see F\`abrega and Fiol \cite{FaFi89}). The following result shows that this is also the case for the other Kautz-like digraph studied here, see Figure~\ref{fig:taula-sK-CK} for a summary.


\begin{proposition}
\indent$(i)$ The subKautz digraph $sK(d,\ell)$ with $d\geq3$ and $\ell\geq2$ is super-$\lambda$.\\
\indent$(ii)$ The subKautz digraph $sK(d,\ell)$ with either $d=\ell=3$, or $d\geq4$ and $\ell\geq3$, is maximally vertex-connected.\\
\indent$(iii)$ The cyclic Kautz digraph $CK(d,\ell)$ with $d\geq3$ and $\ell\geq3$ is super-$\lambda$.\\
\indent$(iv)$ The cyclic Kautz digraph $CK(d,\ell)$ with either $d=3$ and $\ell=4$, or $d,\ell\geq4$, is super-$\kappa$.\\
\indent$(iv)$ The cyclic Kautz digraph $CK(d,\ell)$ with either $d=3$ and $\ell\neq4$, or $d\geq4$ and $\ell=3$, is maximally vertex-connected.
\end{proposition}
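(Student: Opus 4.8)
The strategy is to deduce every assertion from the two theorems of F\`abrega and Fiol (Theorems~\ref{theo1FaFi} and~\ref{theo2FaFi}), which translate the inequalities $D\le 2\gamma$, $D\le 2\gamma-1$ and $D\le 2\gamma-2$ between the diameter $D$ and the semigirth $\gamma$ into statements about $\lambda$, $\kappa$, super-$\lambda$ and super-$\kappa$. The diameters of both families are already known (Theorem~\ref{theo1}, Corollaries~\ref{coro0} and~\ref{coro1}, Proposition~\ref{propo1}), so the two quantities still to be pinned down are the minimum degree $\delta$ and the semigirth $\gamma$ of each family.

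For the minimum degree, the out-degree count right after~(\ref{eq:adj-sK}) shows that every vertex of $sK(d,\ell)$ has out-degree $d$ or $d-1$, the value $d-1$ being always attained, so $\delta^{+}=d-1$; since the Kautz-like digraphs are isomorphic to their converses, $\delta^{-}=d-1$ as well, hence $\delta(sK(d,\ell))=d-1$. The same bookkeeping (out-degree $d$ exactly when $x_2=x_\ell$, and $d-1$ otherwise, which is always possible) gives $\delta(CK(d,\ell))=d-1$. Thus $\delta\ge 3$ precisely when $d\ge 4$, while $\delta=2$ when $d=3$.

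The main point is that the semigirth is $\gamma=\ell$ for both families. For $sK(d,\ell)$ one argues on words, exactly as in Section~3: a walk of length $k$ from $\x$ to $\y$ is a word of length $\ell+k$ all of whose length-$\ell$ windows are vertices. If $\dist(\x,\y)=k<\ell$, the overlap between $\x$ and $\y$ fixes every symbol of that word, so the shortest path is unique, and a path of length $k+1$ is readily seen to be impossible; if $\dist(\x,\y)=\ell$, the unique shortest path is the word $x_1\cdots x_\ell y_1\cdots y_\ell$. This establishes conditions $(a)$ and $(b)$ of the definition of the semigirth at the value $\ell$, so $\gamma\ge\ell$. Conversely, exhibiting a pair $\x,\y$ with $\dist(\x,\y)=\ell$ that nevertheless admits an $\x\to\y$ path of length $\ell+1$ (a suitable pair with, say, $x_1=y_\ell$ works already for $d\ge 3$, since the free intermediate symbol then has an admissible value; for small $\ell$ this is checked by hand) violates condition $(a)$ at the value $\ell+1$, whence $\gamma\le\ell$. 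Since $CK(d,\ell)=L(sK(d,\ell-1))$ and $sK(d,\ell-1)$ is not a directed cycle, the semigirth-increment result of F\`abrega and Fiol~\cite{FaFi89} then gives $\gamma(CK(d,\ell))=\gamma(sK(d,\ell-1))+1=\ell$.

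With $\delta=d-1$ and $\gamma=\ell$ in hand, each assertion is a short check. The two maximal-vertex-connectivity statements follow from Theorem~\ref{theo1FaFi}$(b)$: in every listed case $D=2\ell-1=2\gamma-1$ (in particular $D=5=2\gamma-1$ for $sK(3,3)$), and that theorem only needs $\delta>1$, which holds since $d\ge 3$. For $d\ge 4$, the super-$\lambda$ statements $(i)$ and $(iii)$ and the super-$\kappa$ statement $(iv)$ follow from Theorem~\ref{theo2FaFi}: there $\delta=d-1\ge 3$, and the diameters satisfy $D\le 2\ell=2\gamma$ for $(i)$ and $(iii)$ and $D=2\ell-2=2\gamma-2$ for $(iv)$ (where $d,\ell\ge 4$), so parts $(a)$ and $(b)$ apply respectively. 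The remaining obstacle, which I expect to be the delicate part alongside the semigirth computation, is the case $d=3$ of $(i)$, $(iii)$ and $(iv)$, where $\delta=2$ and Theorem~\ref{theo2FaFi} does not apply. For these I would argue directly: using the explicit routing of Section~3 and the facts $\gamma=\ell$, $D\le 2\gamma$, one shows, along the lines of the proof of Theorem~\ref{theo2FaFi}, that any disconnecting set of $\delta=2$ arcs (respectively vertices) must consist of the arcs leaving, or entering, a single vertex, i.e.\ is trivial; alternatively $(iii)$ for $d=3$ can be obtained from $(i)$ for $d=3$ via $CK(3,\ell)=L(sK(3,\ell-1))$ and the stability of super-$\lambda$ under the line-digraph operation, while the single finite digraph $CK(3,4)$ occurring in $(iv)$ can be checked directly.
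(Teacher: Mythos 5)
Your proposal follows essentially the same route as the paper: bound the semigirth below by $\ell$, quote the already-established diameters, and feed $\delta$ and $\gamma$ into Theorems~\ref{theo1FaFi} and~\ref{theo2FaFi}. The only mechanical difference is that you prove $\gamma(sK(d,\ell))=\ell$ directly by the word-overlap argument and transfer it to $CK(d,\ell)$ through the line-digraph increment, whereas the paper gets $\gamma\geq\ell$ in one line from the fact that both families are subdigraphs of $K(d,\ell)$, whose semigirth is $\ell$ by F\`abrega and Fiol. Your direct computation is more self-contained (and arguably safer, since monotonicity of the semigirth under taking subdigraphs is not completely obvious), but note that the exact value is not needed: only inequalities of the form $D\leq 2\gamma$, $2\gamma-1$, $2\gamma-2$ enter, so the upper bound $\gamma\leq\ell$ you sketch is superfluous.

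The point where you deviate is the interesting one. You correctly compute $\delta(sK(d,\ell))=\delta(CK(d,\ell))=d-1$, so for $d=3$ the hypothesis $\delta\geq3$ of Theorem~\ref{theo2FaFi} fails, and the super-$\lambda$ claims of $(i)$ and $(iii)$ for $d=3$, as well as the super-$\kappa$ claim for $CK(3,4)$ in $(iv)$, cannot be read off from that theorem. The paper's own proof does not confront this: it simply states that the result follows from Theorems~\ref{theo1FaFi} and~\ref{theo2FaFi}, which strictly covers the maximal-connectivity statements (only $\delta>1$ is needed there) and the superconnectivity statements for $d\geq4$, but not the $d=3$ superconnectivity cases. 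So your worry identifies a genuine gap in the published argument; however, your own treatment of those cases is only a list of strategies (redo the proof of Theorem~\ref{theo2FaFi} ad hoc for $\delta=2$, invoke preservation of super-$\lambda$ under the line-digraph operation, or check $CK(3,4)$ by hand), none of which is actually carried out, so as written your proof is incomplete at exactly the place where the paper is silent. To make either argument complete, one of those routes has to be executed; for everything else your proposal matches the paper's proof.
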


\begin{proof}
Since both $sK(d,\ell)$ and $CK(d,\ell)$ are subdigraphs of $K(d,\ell)$, with semigirth $\ell$ (see F\`abrega and Fiol \cite{FaFi89}), then the semigirths of these digraphs are at least $\ell$.
Hence, by using that the diameters of $sK(d,\ell)$ and $CK(d,\ell)$ are given in Theorem \ref{theo1}, Proposition \ref{propo1}, and Corollaries \ref{coro0} and \ref{coro1}, the result follows from Theorems~\ref{theo1FaFi} and~\ref{theo2FaFi}.
\end{proof}

\begin{figure}[t]
\begin{center}
  \includegraphics[width=14.5cm]{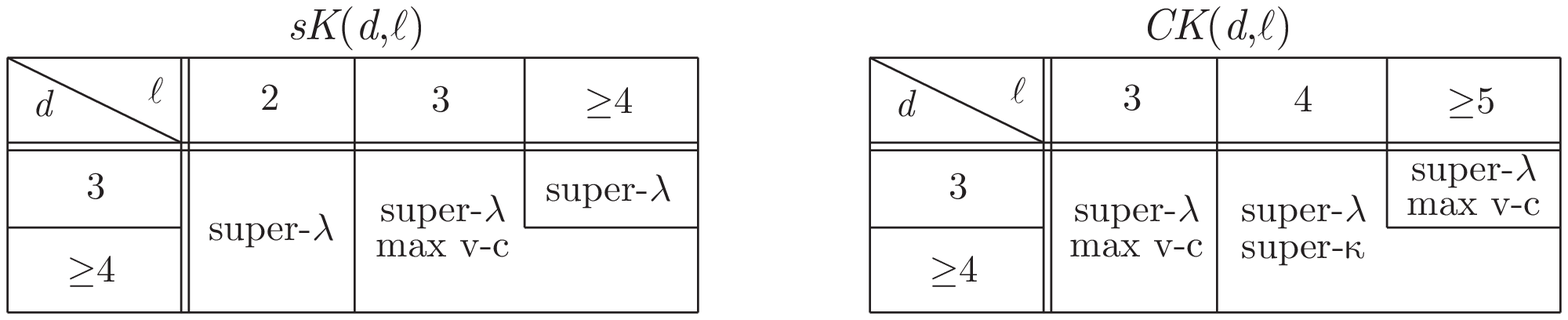}
  \caption{Summary of the connectivities of $sK(d,\ell)$ and $CK(d,\ell)$, depending on the values of $d$ and $\ell$.}
  \label{fig:taula-sK-CK}
  \end{center}
\end{figure}

\section{Cyclic Kautz digraphs $CK(d,3)$ with $d\geq3$}

The cyclic Kautz digraphs $CK(d,3)$ with $d\geq3$ have some special properties that, in general, are not shared with $CK(d,\ell)$ with $\ell>3$.
These properties are listed in the following result.

\begin{lemma}
The cyclic Kautz digraphs $CK(d,3)$ with $d\geq3$ satisfy the following properties:\\
\indent$(a)$ $(d-1)$-regular.\\
\indent$(b)$ Number of vertices: $N=d^3-d$, number of arcs: $m=(d+1)d(d-1)^2$.\\
\indent$(c)$ Diameter: $2\ell-1=5$.\\
\indent$(d)$ $CK(d,3)$ 
are the line digraphs of the subKautz digraphs $sK(d,2)$, which are obtained from the Kautz digraphs $K(d,2)$ by removing the arcs of the digons.\\
\indent$(e)$ Vertex-transitive.\\
\indent$(f)$ Eulerian and Hamiltonian.
\end{lemma}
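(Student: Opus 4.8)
The plan is to verify the six items essentially one at a time, leaning on the structural results already established in the excerpt and on the line-digraph relation $CK(d,3)=L(sK(d,2))$ (a special case of part $(a)$ of the Lemma relating $CK$ and subKautz digraphs). First, for $(d)$ I would simply invoke that earlier line-digraph Lemma with $\ell=3$, together with the remark made right after the definition of $sK(d,\ell)$ that $sK(d,2)$ is obtained from $K(d,2)$ by deleting all arcs lying in a digon; this is the cleanest starting point because several other items follow from it. For $(a)$, the regularity: a vertex of $CK(d,3)$ is a word $x_1x_2x_3$ with $x_1\ne x_2$, $x_2\ne x_3$, $x_1\ne x_3$, i.e.\ all three symbols distinct; its out-neighbours are $x_2x_3 y$ with $y\ne x_2,x_3$, and since $x_2,x_3$ are two distinct symbols of $\Z_{d+1}$ there are exactly $(d+1)-2=d-1$ choices for $y$, and each such $y$ also satisfies $y\ne x_2$ so the target is a legal vertex; the in-degree count is symmetric (or follows from the isomorphism with the converse digraph proved earlier). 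Hence $CK(d,3)$ is $(d-1)$-regular.

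For $(b)$, the number of vertices is the number of ordered triples of pairwise-distinct symbols from a $(d+1)$-set, namely $(d+1)d(d-1)=d^3-d$; this also agrees with Proposition~\ref{propo0}, since $n_{d,3}=d^3+(-1)^3 d=d^3-d$. The number of arcs is then $N(d-1)=(d+1)d(d-1)\cdot(d-1)=(d+1)d(d-1)^2$ by $(a)$. Item $(c)$ is immediate from Proposition~\ref{propo1}$(i)$ (the case $\ell=3$), which gives diameter $2\ell-1=5$; alternatively the explicit pair $012$ versus $210$, $213$ exhibited in Figure~\ref{fig:sK(3,3)+CK(3,3)} and in the proof of Proposition~\ref{propo1} can be cited.

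The remaining two items are the ones requiring a genuine (if short) argument. For $(e)$, vertex-transitivity, I would exhibit automorphisms: the permutation action of the symmetric group $S_{d+1}$ on the alphabet $\Z_{d+1}$, acting coordinatewise on words $x_1x_2x_3$, clearly preserves the vertex condition (pairwise distinctness) and the adjacency rule (which is phrased purely in terms of equalities/inequalities of symbols), and $S_{d+1}$ acts transitively on ordered triples of distinct symbols; composing with the converse-isomorphism $\Psi$ is not even needed. For $(f)$, Eulerian: a connected digraph is Eulerian iff it is $d'$-in-regular and $d'$-out-regular for a common $d'$ at every vertex — here in- and out-degree both equal $d-1$ by $(a)$, and connectedness holds since the diameter is finite ($=5$ by $(c)$), so $CK(d,3)$ is Eulerian. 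For Hamiltonicity, the natural route is the classical fact that the line digraph of a connected digraph in which every vertex has in- and out-degree at least one and which is Eulerian is Hamiltonian — more precisely, an Eulerian circuit in $sK(d,2)$ lifts to a Hamiltonian cycle in $L(sK(d,2))=CK(d,3)$; so I would first check that $sK(d,2)$ is itself connected and Eulerian ($(d-1)$-regular by the remark after its definition, connected because $K(d,2)$ minus its digons is still strongly connected for $d\ge 3$, which can be seen directly or via its small diameter $2\ell=4$ from Corollary~\ref{coro1}$(i)$), and then apply the line-digraph/Eulerian-to-Hamiltonian principle. The main obstacle I anticipate is precisely the Hamiltonicity claim: it is the only item not reducible to a one-line invocation of an earlier statement, and care is needed to state correctly the hypothesis under which "Euler circuit in $G$ $\Rightarrow$ Hamilton cycle in $L(G)$'' holds (one needs $G$ connected with $\delta\ge 1$, which is satisfied here), and to confirm strong connectivity of $sK(d,2)$ for all $d\ge 3$ rather than just checking a picture.
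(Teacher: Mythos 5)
Your proposal is correct, and for items $(a)$--$(d)$ and $(f)$ it follows essentially the paper's route, only with the details written out: the paper disposes of $(a)$--$(d)$ by citing the general properties of $CK(d,\ell)$ (your explicit counts agree with Proposition~\ref{propo0} and Proposition~\ref{propo1}), and for $(f)$ it uses exactly your argument that $(d-1)$-regularity plus strong connectivity gives Eulerian, and that an Euler circuit of $sK(d,2)$ lifts to a Hamilton cycle of its line digraph $CK(d,3)$; your added care (stating the Euler-to-Hamilton lifting hypothesis and checking strong connectivity of $sK(d,2)$ via the finite diameter $2\ell=4$ of Corollary~\ref{coro1}) fills in steps the paper leaves implicit. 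The genuine difference is item $(e)$: the paper proves vertex-transitivity by asserting that $sK(d,2)$ is vertex- and arc-transitive and then using that the line digraph of an arc-transitive digraph is vertex-transitive, whereas you act coordinatewise with $S_{d+1}$ on the alphabet and use that this action preserves the (purely inequality-based) vertex and adjacency conditions and is transitive on ordered triples of distinct symbols. Your argument is more self-contained, since it avoids the unproved arc-transitivity claim for $sK(d,2)$; its only limitation is that it is special to $\ell=3$, where the vertex set of $CK(d,\ell)$ consists exactly of the ordered $\ell$-tuples of pairwise distinct symbols (for $\ell>3$ the coordinatewise action has several orbits on vertices), but that is precisely the scope of this lemma, so the argument is valid as stated.
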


\begin{proof}
$(a)$, $(b)$, $(c)$ and $(d)$ come from the properties of general $CK(d,\ell)$. $(e)$ Since $sK(d,2)$ (with $d\geq3$) are vertex-transitive and arc-transitive, their line digraphs $CK(d,3)$ are vertex-transitive.
$(f)$ $sK(d,2)$ and $CK(d,3)$ with $d\geq3$ are Eulerian, because they are $(d-1)$-regular. Since $sK(d,2)$ (with $d\geq3$) are Eulerian, their line digraphs $CK(d,3)$ are Hamiltonian.
\end{proof}

\subsection{Mean distance}

As said before, $CK(d,\ell)$ are asymptotically optimal with respect to the mean distance. Now, we give the exact formulas for the mean distance of $sK(d,2)$ and $CK(d,3)$ with $d\geq3$.
Let $n$ and $N$ be the numbers of vertices of $sK(d,2)$ and $CK(d,3)$, respectively.

\begin{lemma} $(a)$ The mean distance of the antipodal subKautz digraph $sK(d,2)$ with $d\ge 3$ is
\begin{equation}
\overline{\partial^*}=\frac{2d^2+3d-1}{d^2+d}.
\end{equation}
\indent$(b)$ The mean distance of the cyclic Kautz digraph $CK(d,3)$ with $d\geq3$ is
\begin{equation}
\overline{\partial}=\frac{3d^3+d^2-5d-2}{d^3-d}.
\end{equation}
\end{lemma}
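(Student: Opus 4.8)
The plan is to compute both mean distances directly by counting, for each digraph, the number of ordered pairs of vertices at each distance $k$, and then dividing the total $\sum_k k\,(\#\text{pairs at distance }k)$ by the number of ordered pairs $(\#\text{vertices})^2$ (or, equivalently, averaging $\dist(\x,\y)$ over all ordered pairs, including $\x=\y$ at distance $0$). Part $(a)$ is the more transparent case and I would do it first. The digraph $sK(d,2)$ has $n=d^2+d$ vertices $x_1x_2$ with $x_1\neq x_2$; from Corollary~\ref{coro1} its diameter is $2\ell=4$, and I would argue that every pair of vertices lies at distance $1$, $2$, $3$, or $4$, with $4$ attained only for the ``antipodal'' pairs $x_1x_2$ and $x_2x_1$ (the weak-antipodality noted in the statement, upgraded here to antipodality). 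Using the explicit distance description — $\dist(\x,\y)=\ell-|\x\sqcap\y|$ when the intersection is long enough, otherwise $\dist(\x,\y)=2\ell-1-|\tilde\x\sqcap\y|$, specialized to $\ell=2$ — I would tabulate: out of $n$ candidate heads $\y=y_1y_2$ for a fixed $\x$, exactly $d-1$ are reached at distance $1$ (those with $y_1=x_2$, $y_2\neq x_1,x_2$), a further block at distance $2$, a block at distance $3$, and exactly one at distance $4$. Summing $k$ over these blocks and over all $\x$, then dividing by $n^2=(d^2+d)^2$, should collapse to $(2d^2+3d-1)/(d^2+d)$ after cancelling one factor of $d^2+d$. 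The bookkeeping is the only delicate point: one must be careful about whether the diagonal pair $\x=\x$ (distance $0$) is included, and about double-checking the block sizes so that they sum to $n$.

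For part $(b)$, $CK(d,3)=L(sK(d,2))$ has $N=d^3-d$ vertices and, by Proposition~\ref{propo1}$(i)$, diameter $2\ell-1=5$. Two routes are available. The first is to again count pairs at each distance $k=0,1,2,3,4,5$ directly in $CK(d,3)$ using the case analysis $(a)$–$(d)$ of the previous section specialized to $\ell=3$: for a fixed source $\x=x_1x_2x_3$ one partitions the $N$ targets according to $|\x\sqcap\y|$ and $|\tilde\x\sqcap\y|$, noting in particular that targets $\y=x_3x_2y_3$ (the $y_1=x_3$, $x_2x_3x_2$-not-a-vertex situation) are exactly the ones pushed out to distance $5$, as shown in the proof of Proposition~\ref{propo1}$(i)$. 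The second, slicker route is to exploit the line-digraph structure: if $G$ is $d$-out-regular with $n$ vertices and $nd$ arcs, the distance in $L(G)$ between arcs $uv$ and $wz$ equals $\dist_G(v,w)+1$ when $v\neq w$, and can be computed from the (unique, by the semigirth bound) shortest walks otherwise; one can then assemble the distance distribution of $L(G)$ from that of $G$ together with the out-degree sequence. I would attempt the line-digraph route because the relevant data for $sK(d,2)$ is already assembled in part $(a)$, but I would be prepared to fall back on the direct count in $CK(d,3)$ if the ``$v=w$'' boundary terms (arcs sharing a head, where the formula $\dist_G(v,w)+1$ needs correction) turn out to be awkward to handle uniformly.

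The main obstacle I anticipate is precisely those boundary cases: in $sK(d,2)$ the exceptional pairs are the antipodal ones at distance $4$ and, in passing to $CK(d,3)$, the pairs of arcs that fail the naive ``distance plus one'' rule because the intermediate vertex coincides. These are few in number — $O(N)$ rather than $O(N^2)$ — so they affect only lower-order terms of the numerator, but they are exactly what distinguishes the true answer from the ``generic'' one, and getting their count wrong by a constant would throw off the final rational function. Accordingly, after deriving the formulas I would sanity-check both against a small explicit case, namely $d=3$: formula $(a)$ should give $\overline{\partial^*}(sK(3,2)) = (18+9-1)/(9+3)=26/12=13/6$, and formula $(b)$ should give $\overline{\partial}(CK(3,3))=(81+9-15-2)/(27-3)=73/24$, which can be verified against the $24$-vertex digraph in Figure~\ref{fig:sK(3,3)+CK(3,3)}$(b)$; agreement there would confirm the block sizes and the treatment of the exceptional pairs.
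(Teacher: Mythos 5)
Your plan is essentially the paper's proof: the paper tabulates the distance distribution of $sK(d,2)$ from one fixed vertex (invoking vertex-transitivity rather than summing over all ordered pairs) and then lifts it to $CK(d,3)$ via the line-digraph rule $N_k=(d-1)n_{k-1}$, the only boundary correction being a single $-1$ at $k=3$ because the source arc $201$ itself (lying on the $3$-cycle $20\rightarrow01\rightarrow12\rightarrow20$) would otherwise be recounted — it is this pair, not arcs sharing a head (for which $\dist_G(v,w)+1$ is already correct), that needs adjusting. Your anticipated $d=3$ checks ($13/6$ and $73/24$) agree with the paper's counts, so the bookkeeping you describe closes as expected.
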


\begin{proof}
Since $CK(d,3)$ (and also $sK(2,2)$) with $d\geq3$ is vertex-transitive, we can compute the number of vertices from any given vertex.
First, we fix the distance layers in $sK(2,2)$. Thus, in Table \ref{table2},
we give the numbers $n_k(u,v)$ of vertices at distance $k=0,1,\ldots,4$ from vertex $u=01$ to vertex $v\in\{01,1x,\ldots,10\}$.
\begin{table}[h]
\begin{center}
\begin{tabular}{|c|c|c|c|}
\hline
$u$ & $v$ & $k=\dist(u,v)$ & $n_k(u,v)$ \\[0.1cm]
\hline
$01$ & $01$ & 0 & $1$  \\
$01$ & $1x$ & 1 & $d-1$  \\
$01$ & $x0$ & 2 & $d-1$   \\
$01$ & $xy$ & 2 & $(d-1)(d-2)$   \\
$01$ & $x1$ & 3 & $d-1$   \\
$01$ & $0x$ & 3 & $d-1$   \\
$01$ & $10$ & 4 & $1$  \\
\hline
\end{tabular}
\end{center}
\caption{Numbers of vertices $v$ at distance $k$ from $u=01$.}
\label{table2}
\end{table}

Then, the total numbers $n_i=n_i(u)$ of vertices at distance $i=0,1,\ldots,4$ from $u$ turn out to be
$$
n_0 =1, \quad n_1 =d-1, \quad n_2 =(d-1)^2, \quad n_3 = 2(d-1), \quad n_4 = 1,
$$
with $n=n_0+n_1+\cdots+n_4=d^2+d$, and showing that $sK(2,2)$ is antipodal.

Now we use again that $CK(d,3)$ is the line digraph of $sK(d,2)$ to conclude that, in the former,
the numbers $N_i$ of vertices at distance $i=0,1,\ldots,5$ from a given vertex, say $201$, are
\begin{eqnarray*}
  && N_0 = n_0 =1, \quad N_1 = n_1 =d-1, \quad N_2 = (d-1)n_1 =(d-1)^2, \quad N_3 = (d-1)n_2-1\\
  && =(d-1)^3-1, \quad N_4 = (d-1)n_3 =2(d-1)^2, \quad N_5 = (d-1)n_4 =d-1,
\end{eqnarray*}
satisfying $N=N_0+N_1+\cdots+N_5=d^3-d$, as requested.

Note that in $N_3=(d-1)n_2-1$ we subtract one unit due to the presence in $sK(d,2)$ of the cycle of length 3: $20\rightarrow 01\rightarrow 12 \rightarrow 20$.
Then, the mean distances  of $CK(d,3)$ with $d\geq3$ are, respectively,
$\displaystyle\overline{\partial^*}=\frac{1}{n}\sum_{k=0}^4 kn_k$, and $\displaystyle\overline{\partial}=\frac{1}{N}\sum_{k=0}^5 kN_k$, which gives the results.
\end{proof}

Observe that, since $CK(d,3)$ is the line digraph of $sK(d,2)$, the respective mean distance satisfies the inequality $\overline{\delta}<\overline{\delta^*}$, in concordance with the results by Fiol, Yebra, and Alegre \cite{fya84}. Also, note that the mean distances of $sK(d,2)$ and $CK(d,3)$, with $d\geq3$, tend, respectively, to $2$ and $3$ for large degree $d-1$, that is, they are asymptotically optimal.

%

\bibliographystyle{plain}

\end{document}